\newif\ifdraft
\draftfalse
 
\documentclass[3p, times,review]{elsarticle}

\usepackage[utf8]{inputenc}
\usepackage[english]{babel}
 \usepackage[]{algorithm2e}
\usepackage[utf8]{inputenc}
\usepackage[T1]{fontenc}
\usepackage{array}
\usepackage{amssymb}
\usepackage{amsfonts}
\usepackage{amsmath}
\usepackage{cases}
\newcommand{\red}[1]{\textcolor{black}{#1}}
\newcommand{\blu}[1]{\textcolor{blue}{#1}}
\usepackage{amsthm}
\usepackage{graphicx}
\usepackage{subfig} 
\usepackage{textcomp}
\usepackage{bm}
\usepackage{booktabs}
\usepackage{mathtools}

\newtheorem{theorem}{Theorem}

\newtheorem{assumptions}{Assumptions}
\newtheorem{lemma}[theorem]{Lemma}

\newcommand{\blue}[1]{\textcolor{black}{#1}}
 \newtheorem{corollary}{Corollary}[theorem]

\theoremstyle{definition}

\newcommand{\R}{\mathbb{R}}

\renewcommand{\P}{\mathbb{P}}
\newcommand{\E}{\mathbb{E}}

\newcommand{\e}{\mathrm{e}}
\newcommand{\rmd}{\mathrm{d}}

\newcommand{\Cdop}{C_\mathrm{dop}}
\newcommand{\DSi}{{\text{D}_\text{Si}}}
\newcommand{\Dox}{\text{D}_\text{ox}}

\setlength{\arrayrulewidth}{0.5mm}
\setlength{\tabcolsep}{14pt}

\newcommand{\TSi}{\mathcal{T}^{\text{Si}}_\ell}

\newcommand{\unit}[2]{#1\,\mathrm{#2}}
\DeclareMathOperator{\MSE}{MSE}

\DeclareMathOperator{\EMC}{E_{MC}}
\DeclareMathOperator{\EMLMC}{E_{MLMC}}
\usepackage{color}



%

\newcommand{\RN}[1]{%
  \textup{\uppercase\expandafter{\romannumeral#1}}%
}
 



 


 
\usepackage[figuresright]{rotating}

\begin{document}

\begin{frontmatter}

  \title{An adaptive multilevel Monte Carlo algorithm
    for the stochastic drift-diffusion-Poisson system}

  \author[tuwien,hannover]{Amirreza Khodadadian\corref{cor1}}
  \ead{khodadadian@ifam.uni-hannover.de}
  \author[tuwien]{Maryam Parvizi}
  \ead{maryam.parvizi@tuwien.ac.at}
  \author[tuwien,asu]{Clemens Heitzinger}
  \ead{clemens.heitzinger@tuwien.ac.at}
  \cortext[cor1]{Corresponding author}
  
  \address[tuwien]{Institute of Analysis and Scientific Computing, 
    Vienna University of Technology (TU Wien),
    Wiedner Hauptstraße 8--10, 
    1040 Vienna, Austria}

  \address[hannover]{Institute of Applied Mathematics, Leibniz University of Hannover, Welfengarten 1, 30167
  	Hannover, Germany}

  \address[asu]{School of Mathematical and Statistical Sciences,
    Arizona State University, Tempe, AZ 85287, USA} 


  \begin{abstract}
    We present an adaptive multilevel Monte Carlo algorithm for
    solving the stochastic drift-diffusion-Poisson system with non-zero recombination rate. The
    a-posteriori error is estimated to enable goal-oriented adaptive
    mesh refinement for the spatial dimensions, while the a-priori
    error is estimated to guarantee \red{linear} convergence of the
    $H^1$ error.  In the adaptive mesh refinement, efficient estimation
    of the error indicator gives rise to better error control.    For the stochastic dimensions, we use the multilevel Monte Carlo
    method to solve this system of stochastic partial differential
    equations. Finally, the advantage of the technique developed here
    compared to uniform mesh refinement is discussed using a realistic numerical
    example.
  \end{abstract}
	
  \begin{keyword}
    Stochastic partial
    differential equation (SPDEs), stochastic drift-diffusion-Poisson system, adaptive mesh refinement, a-priori error
    estimation, a-posteriori error estimation, multilevel Monte Carlo.\\
     AMS subject classifications: 60H15, 35R60, 65M50, 82D37
  \end{keyword}
 
\end{frontmatter}

 \section{Introduction}
 
 The stochastic drift-diffusion-Poisson (DDP) system is a general model
 for charge transport in random environments. A leading example is the
 field-effect transistor (FET), where the stochastic coefficients can
 describe process variations, noise, and fluctuations in devices as
 diverse as transistors and sensors. Process variations, noise, and
 fluctuations are significantly important especially in devices scaled into
 the deca-nanometer regime, as random effects become more important in
 smaller devices. Among the many sources of noise, random-dopant
 fluctuations (RDF)  \cite{gerrer2012impact,markov2012statistical,lee2018impact} are one of the most important. Random-dopant
 fluctuations stem from the fact that the doping process in the semiconductors leads to a random number and random position of dopants. Therefore, each impurity atom influences the charge transport and the mobilities.
  A schematic
 diagram is shown in Figure~\ref{fig:device}.
 
 In this paper we analyze  a standard adaptive finite element method  of the form
   \textit{SOLVE $\rightarrow$ ESTIMATE $\rightarrow$ MARK $\rightarrow$ REFINE}
\red{\cite{ainsworth2011posteriori}}  to discretize the  stochastic drift-diffusion-Poisson. In order to do the \textit{MARK} step we need to  
compute 
the a  posteriori error estimate for each  element.

A-priori error estimates yield knowledge
 about convergence and stability of the solvers and information on the
 asymptotic behavior of errors for different mesh sizes
 \cite{szabo1991finite}. A-posteriori error estimates make it possible
 to control the mesh on the entire computational domain by using
 adaptive algorithms, i.e., by focusing computational effort on the
 parts of the domain which contribute most to the total error
 \cite{ainsworth1997posteriori}. In adaptive mesh refinement,
 a-posteriori error estimators are used to indicate where the error is
 particularly high, and then more mesh elements are placed in those
 locations. Here we estimate the local error for a coupled system of
 equations. The error estimate indicates which elements should be
 refined or coarsened simultaneously for the Poisson equation and the
 drift-diffusion equations.
 
 \begin{figure}[t!]
   \centering
   \includegraphics[width=0.8\linewidth]{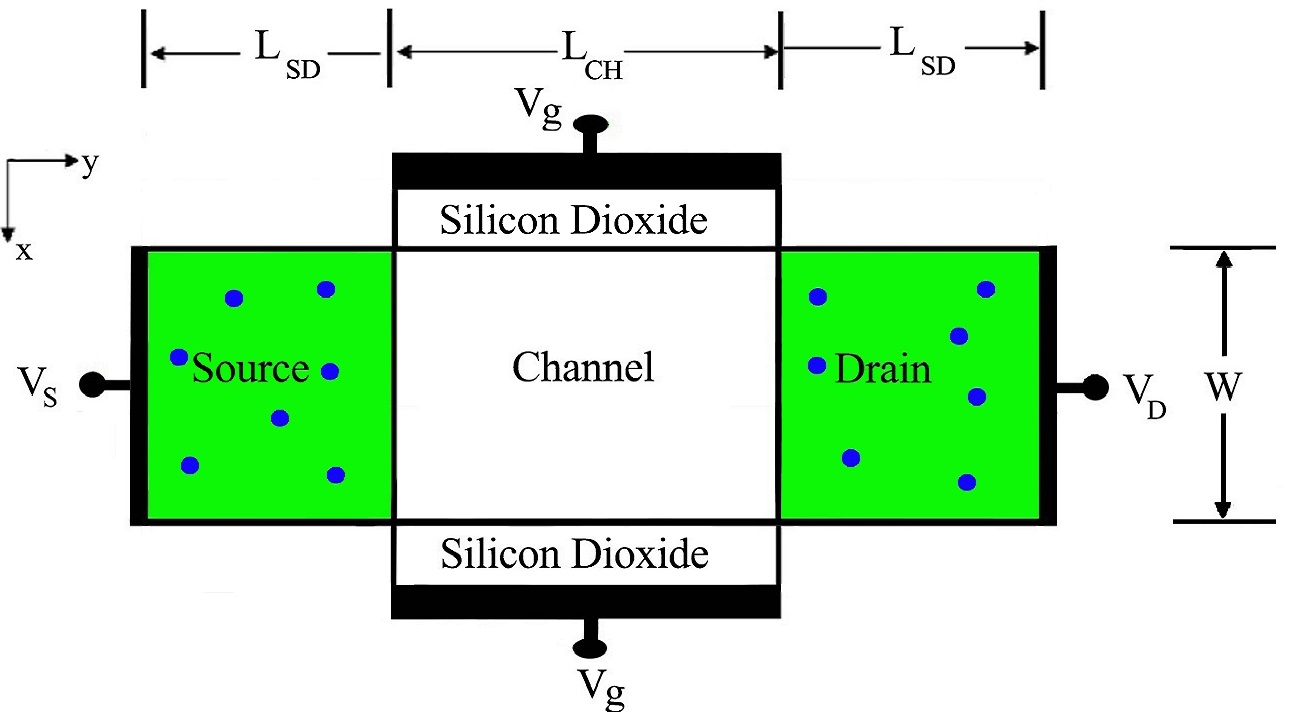}
   \caption{Schematic diagram of a symmetrical double-gate
     metal-oxide-semiconductor field-effect transistor (DG-MOSFET)
     showing its geometrical parameters and the contacts. The blue
     circles in the source and drain regions are the randomly
     distributed impurity atoms.}
   \label{fig:device}
 \end{figure}  	
 		
 The mentioned stochastic problem is computationally very expensive; in
 order to obtain an acceptable error, thousands of simulations are
 necessary. In stochastic PDE, the Monte Carlo (MC) method is one of
 the most popular and straightforward numerical techniques. However,
 the main drawback of the MC method is its well-known convergence rate.
 The multilevel Monte Carlo finite-element (MLMC-FE) method
 \cite{barth2011multi, cliffe2011multilevel,khodadadian2019multilevel} is an efficient numerical
 alternative. In \cite{taghizadeh2017optimal}, we introduced an optimal
 MLMC-FE method to model the random effects in a charge-transport
 model. In the optimal  MLMC-FE method, in each level, we
   determine  mesh sizes $h_\ell$ and the number of samples $M_\ell$
   to minimize  the computational costs such that the total error (i.e., statistical and discretization error) is less than a prescribed error tolerance.
  In \cite{Khodadadian2017three}, the
 efficiency of the method for three-dimensional simulations of various
 nanoscale devices was investigated in detail. 
 Convergence can be
 improved by using a randomized rank-1 lattice rule 
 \cite{giles2009multilevel, Khodadadian2017optimal}.
 	 
 
 The first analysis of a finite-element method, a one-dimensional one,
 for solving the (deterministic) DDP system can be found in
 \cite{cockburn1992convergence}. An extension of the analysis to the
 two-dimensional problem was presented in \cite{chen1995analysis}. In
 \cite{jerome1991finite}, fixed points of finite-element
 discretizations were used to approximate the solutions of the
 steady-state drift-diffusion system, and the convergence rate in the
 energy norm was estimated. In \cite{jerome1995approximation}, the
 optimal convergence rate and its stability were shown. However, in all
 these publications, the
 recombination rate is zero.
 
 In \cite{eigel2014adaptive}, an adaptive stochastic Galerkin
 finite-element method for linear elliptic boundary-value problems was
 presented. The idea of using an adaptive MLMC method for weak
 approximations of solutions of stochastic differential equations was
 explained in \cite{hoel2014implementation}. In
 \cite{eigel2016adaptive}, an adaptive MLMC algorithm was introduced
 for PDEs with stochastic data.
 
 	   
 In \cite{taghizadeh2017optimal}, we showed the effectiveness of using 
 an MLMC method as an alternative to the MC method in the solving the
 stochastic DDP system. There the meshes were refined uniformly.  The
 novelty of the present work is computing an accurate local-error
 estimator based on goal oriented error estimation for the coupled
 system of equations. In order to control the error, only a part
   of the computational geometry which has a large error  must be
 refined and therefore, the method is more computationally effective. Randomness
 in the model problem considered here stems from the random position of
 dopants in a transistor, which affect the error and hence the
 refinement process.  Since MLMC is a variance-reduction method, the
 faster decay in the variance of the MLMC method leads to a reduction
 of the statistical error and therefore the total computational cost.
 Here the effect of adaptive mesh refinement on the reduction of the
 variance is also taken into account. Finally, the developed
   numerical technique can be used for other stochastic problems as well, e.g., \cite{dehghan2016variational,dehghan2017local,dehghan2016meshless,dehghan2015meshless,abbaszadeh2019direct,abbaszadeh2020analysis}.
 	   	
 The remainder of this manuscript is as follows. In Section~2, we give the system of model equations with stochastic coefficients and explain its boundary consitions.
 In Section~3, we formulate a finite element method for the space discretization of the equation.
  In Section~4, we present the error estimates in the
 	finite-element space, namely an a-priori error estimate and an
 	a-posteriori error estimate. In Section~5, we introduce the MLMC
 	finite-element method for the DDP system and define an
 	optimization problem to minimize the total computational
 	cost. Then, we present numerical results for a transistor and
 	quantify the random-dopants effect in Section~6. The adaptive MLMC-FE method is
 	used to approximate the expected value of the solution of the system of
 	equations with random coefficients. Also, the method is compared with
 	MLMC-FE method with uniform mesh refinement. Finally, conclusions are
 	drawn in Section~7.
 	
 \section{The Stochastic Model Problem}\label{model}
 
 The stochastic
 Poisson equation is used generally for the electrostatic potential
 \begin{equation}\label{poisson}
   -\nabla\cdot(A(x,\omega)\nabla V(x,\omega))=\rho(x,\omega),
 \end{equation}
  on the bounded Lipschitz domain $D\subset\mathbb{R}^2$.  In the equation, $V$ indicates the electrostatic potential, $A$ denotes the dielectric constant (permittivity), and $\rho$ is the charge concentration.
  In \eqref{poisson}, $x \in D$ is the spatial variable,
 $\omega=(\omega_1,\omega_2,\ldots,\omega_n)$ is an $n-$dimensional
 random variable defined on the complete probability space $(\Omega,
 \mathbb{A}, \mathbb{P})$ equipped with $\mathbb{A}\subset 2^\Omega$ as the
 $\sigma-$algebra of events, $\mathbb{P}\colon
 \mathbb{A}\rightarrow[0,1]$ as a probability measure, the sample space $\Omega$. The randomness arises from the random distribution of dopant atoms (uniformly distributed) in source and drain areas (shown in Figure \ref{fig:device}). 
 Also, the coefficient $A\colon D \times \Omega \rightarrow \mathbb{R}^{2\times 2}$ is a matrix of real-valued functions, and 
  $\rho$ is a real-valued \red{scalar} function.
 In a semiconductor, the charge concentration is derived by the free
 electron and hole densities~(i.e., $n$ and~$p$) and the doping
 concentration~$C$; the total charge concentration is therefore
 \begin{equation*}\label{eq:charge}
   \rho=q(p-n+C).
 \end{equation*}
 We change the concentrations~$n$ and~$p$ to the
 so-called Slotboom variables~$u$ and~$v$, which are given by
 \begin{subequations}
   \label{e:Slotboom}
   \begin{align}
     n(x,\omega) &=: n_i \e^{V(x,\omega)/U_T}u(x,\omega),\label{eq:np}\\ 
     p(x,\omega) &=: n_i \e^{-V(x,\omega)/U_T}v(x,\omega)\label{eq:np1}.
   \end{align}
 \end{subequations}
 Here, $n_i$ is the intrinsic carrier density of the semiconductor (in
 the numerical examples, a value of $1.5\times 10^{10}\,\text{cm}^{-3}$
 is used for silicon) and $U_T$ indicates the thermal voltage, which is at room temperature is
 about $\unit{26}{mV}$.
 
 A schematic diagram of a sample computational geometry is shown in
 Figure~\ref{fig:device}.  The domain is partitioned into two
 subdomains, i.e.,
 \begin{equation*}
   D = \DSi \cup \Dox. 
 \end{equation*}  
 The first subdomain $\DSi$ consists of silicon, i.e., the channel and
 the source and drain areas, where the drift-diffusion-Poisson system models the charge transport. The gate contact is separated from the
 channel by an insulating silicon dioxide layer $\Dox$.  In $\Dox$,
 there is no charge transport and therefore we only have the Poisson equation.
 
 The boundary $\partial D$ of the domain~$D$ is separated into
 $\partial D_D$ and $\partial D_N$, which  denote the surfaces where
 the Dirichlet and Neumann boundary conditions
 \begin{align}
   V|_{\partial D_D}=V_D \quad \text{and} \quad
   \boldsymbol{n} \cdot \nabla V|_{\partial D_N}=0
 \end{align}
 hold, where $\boldsymbol{n}$ is the outward pointing unit normal on
 the boundary $\partial D_N$. Dirichlet boundary conditions
 $V|_{\partial D_D}$ are applied for the potential at the source,
 drain, and gate contacts, i.e., $V=V_\text{S}$, $V=V_\text{D}$, and
 $V=V_\text{G}$.
 
 Neumann boundary conditions are applied on all other boundaries. On
 the Neumann parts $\partial D_{N}$ of the boundary the currents and
 the electric field are assumed to vanish in the normal direction to
 the surface. This yields the three Neumann boundary conditions
 \begin{align*}
   \partial V (x) \cdot \boldsymbol{n} &= 0\quad  \forall x \in \partial D_N,\\  
   J_n(x) \cdot \boldsymbol{n}         &= 0\quad \forall x \in \partial D_N,\\
   J_p(x) \cdot \boldsymbol{n}         &= 0\quad \forall x \in \partial D_N.
 \end{align*}
 The stochastic DDP system
 \begin{subequations}\label{drift-diffusion}
   \begin{align}
     -\nabla\cdot(A(x,\omega)\nabla V(x,\omega))&=q\left(C(x,\omega)+p(x,\omega)-n(x,\omega)\right),
                                           \label{e:PBE}\\
     \nabla\cdot J_n(x,\omega)&=qR(x,\omega),\\ 
     -\nabla \cdot J_p(x,\omega) &=qR(x,\omega),\\ 
     J_n(x,\omega)&=q(D_n\nabla n(x,\omega)-\mu_n n(x,\omega)\nabla V(x,\omega)),
                    \label{e:DD1}\\ 
     J_p(x,\omega)&=q(-D_p\nabla p(x,\omega)-\mu_p p(x,\omega)\nabla V(x,\omega))
                    \label{e:DD2}
   \end{align}
 \end{subequations}
 is employed to model self-consistent charge transport, where $J_n$ and
 $J_p$ are the current densities,  $\mu_n$ and $\mu_p$ are the
 mobilities, and $D_n$ and $D_p$ are diffusion coefficients, which can
 be calculated by the Einstein relations $D_p = U_T \mu_p$ and $D_n =
 U_T \mu_n$.
 
 Furthermore, $R$ is Shockley-Read-Hall recombination rate, which is
 defined by
 \begin{equation*}
   R_\text{SRH}(n,p):=\frac{n p-n_i^2}{\tau_n(p+n_i)+\tau_p(n+n_i)}, 
 \end{equation*} 
 where $\tau_{n}$ and $\tau_{p}$ are the lifetimes of the free
 carriers (absolutely positive).  For the purposes of the present work, any other
 recombination rate can be used as long as it satisfies modest
 assumptions \cite{taghizadeh2017optimal}.
 
 
 Using the Slotboom variables~$u$ and~$v$ defined in
 \eqref{e:Slotboom}, the DDP system
 \eqref{drift-diffusion} takes the form
 \begin{subequations}\label{uv-drift-diffusion}
   \begin{align}
   -\nabla\cdot(A(x)\nabla V(x,\omega))
   &=q \left(C(x,\omega) -  n_i\left(\e^{V(x,\omega)/U_T}u(x,\omega)
   - \e^{-V(x,\omega)/U_T}v(x,\omega)\right)\right)\\
     U_Tn_i\nabla\cdot(\mu_{n} e^{V/U_T}\nabla u(x,\omega))&=R(x,\omega),\\  
     U_Tn_i\nabla\cdot(\mu_p e^{-V/U_T}\nabla v(x,\omega))&=R(x,\omega) 
   \end{align}
 \end{subequations}
 with the Shockley-Read-Hall recombination rate
 \begin{equation*}
   R_{\text{SRH}}(x,\omega)=n_i\frac{u(x,\omega)v(x,\omega)-1}{\tau_p(e^{V/U_T}u(x,\omega)+1)+\tau_n(e^{-V/U_T}v(x,\omega)+1)}.
 \end{equation*}
 The Dirichlet boundary conditions for the Slotboom variables are
 \begin{equation}\label{dirichlet BCs}
   u(x,\omega)|_{\partial D_{D,\mathrm{Si}}}=u_D(x)\quad \text{and} \quad
   v(x,\omega)|_{\partial D_{D,\mathrm{Si}}}=v_D(x).
 \end{equation}
 Zero Neumann boundary \red{conditions} hold on the Neumann part $\partial D_N$ of the boundary.
 The interface conditions
 \begin{align*}
   V(0+,y,\omega) - V(0-,y,\omega)
   & = 0
   && \text{on } \Gamma,\\
   A(0+)\partial_x V(0+,y,\omega)-A(0-)\partial_x V(0-,y,\omega)
   & = 0
   && \text{on } \Gamma
 \end{align*}
 can be used to model the presence of a layer of charge carriers at the
 surface of a FET after homogenization \cite{Baumgartner2012existence}.
 Here $\Gamma$ is the interface or surface between $\DSi$ and $\Dox$,
 and the notation $0+$ and $0-$ denotes the limits from both sides of
 the interface~$\Gamma$ located at $x=0$. The directions of~$y$ are
 along the interface. The model has been used to model charge transport \cite{khodadadian2015transport} in modern nanoscale devices, see, e.g., \cite{khodadadian2016basis,khodadadian2017design,mirsian2019new,khodadadian2020bayesian}

  Finally, for all $\omega\in \Omega$, we can write the following boundary-value problem (BVP)
  \begin{subequations}\label{modeleqn}
  	\begin{alignat}{2}
  		-\nabla\cdot(A(x)\nabla V(x,\omega))
  		&=q \left(C(x,\omega) -  n_i\left(\e^{V(x,\omega)/U_T}u(x,\omega)
  		- \e^{-V(x,\omega)/U_T}v(x,\omega)\right)\right)
  		~~&&\text{in } \DSi,\\
  		-\nabla\cdot(A(x)\nabla V(x,\omega))
  		& = 0
  		&& \text{in } \Dox,\\
  		{U_T}\nabla \cdot\left( {{\mu _n}{e^{{{V\left( {x,\omega } \right)} \over {{U_T}}}}}\nabla u\left( {x,\omega } \right)} \right) &= {{u\left( {x,\omega } \right)v\left( {x,\omega } \right) - 1} \over {{\tau _p}\left( {{e^{{{V\left( {x,\omega } \right)} \over {{U_T}}}}}u\left( {x,\omega } \right) + 1} \right) + {\tau _n}\left( {{e^{ - {{V\left( {x,\omega } \right)} \over {{U_T}}}}}v\left( {x,\omega } \right) + 1} \right)}}&&\text{in}~\DSi,\\ 	
  		{U_T}\nabla \cdot\left( {{\mu _p}{e^{-{{V\left( {x,\omega } \right)} \over {{U_T}}}}}\nabla v\left( {x,\omega } \right)} \right) &= {{u\left( {x,\omega } \right)v\left( {x,\omega } \right) - 1} \over {{\tau _p}\left( {{e^{{{V\left( {x,\omega } \right)} \over {{U_T}}}}}u\left( {x,\omega } \right) + 1} \right) + {\tau _n}\left( {{e^{ - {{V\left( {x,\omega } \right)} \over {{U_T}}}}}v\left( {x,\omega } \right) + 1} \right)}}&&\text{in}~\DSi,\\      	
  		V(0+,y,\omega) - V(0-,y,\omega)
  		&= 0
  		&& \text{on } \Gamma,\\
  		A(0+)\partial_{x} V(0+,y,\omega)-A(0-)\partial_{x} V(0-,y,\omega)
  		&= 0
  		&& \text{on } \Gamma,\\
  		V(x,\omega) &= V_D(x)
  		&& \text{on } \partial D_D,\\
  		\mathbf{n} \cdot \nabla V(x,\omega) &=0
  		&& \text{on } \partial D_N,\\
  		u(x,\omega) = u_D(x),\quad v(x,\omega) &= v_D(x)
  		&& \text{on } \partial D_{D,\mathrm{Si}},\\
  		\mathbf{n} \cdot \nabla u(x,\omega)=0,\quad
  		\mathbf{n} \cdot \nabla v(x,\omega)&=0
  		&& \text{on }\partial D_{N,\mathrm{Si}}
  	\end{alignat}
  \end{subequations}

In order to state the main result,
 the coefficients and boundary conditions in the boundary-value problem (\ref{modeleqn}) have to
 satisfy the following assumptions.

 \begin{assumptions}\label{assump1}
   \begin{enumerate}
   \item The bounded computational domain $D \subset \R^2$ has a $C^2$ Dirichlet
     boundary $\partial D_D$, the Neumann boundary $\partial D_N$
     consists of $C^2$ segments, and the Lebesgue measure of the
     Dirichlet boundary $\partial D_D$ which is nonzero.  The $C^2$ manifold
     $\Gamma \subset D$ separates the domain~$D$ into two nonempty regions
     $D^+$ and $D^-$; therefore,
     $\mathop{meas}(\Gamma \cap \partial D) = 0$ and
     $\Gamma \cap \partial D \subset \partial D_N$ hold.

  \item The  coefficient   $A(x,\cdot)$ is assumed to be a
 	strongly measurable mapping from $\Omega$ into $L^\infty(\Omega)$. Also, the coefficient~$A$
 	is \red{symmetric},  satisfying the ellipticity condition, and  bounded w.r.t. 
 position $x \in D$
 	and elementary events $\omega \in \Omega$.  Moreover, $\red{A}|_{D^+ \times \Omega} \in C^1(D^+\times
 	\Omega, \R^{2\times2})$ and $\red{A}|_{D^-\times\Omega} \in
 	C^1(D^-\times\Omega, \R^{2\times2})$.
 	\label{assump A}
 	

   \item The doping concentration $\Cdop(x,\omega)$ is bounded above
     and below with the bounds
     \begin{equation*}
       \underline{C}
       := \inf_{x \in D} \Cdop(\cdot,\omega)
       \le \Cdop(\cdot,\omega)\leq \sup_{x \in D} \Cdop(\cdot,\omega)
       =: \overline{C} \quad \forall \omega \in \Omega.
     \end{equation*}
 
 
   \item There is a constant   $ K \ge 1$ which satisfies 
     \begin{equation*}
       \frac{1}{K}\leq u_D(x),v_D(x) \le K
       \quad \forall x \in \partial D_{\mathrm{Si},D}.
     \end{equation*}
 
     
   \item The electron and hole mobilities are
     uniformly bounded functions of $x \in D$ and $\omega \in \Omega$. Therefore, $\forall x \in D$ as well as $\forall\omega\in\Omega$ we have
     \begin{alignat*}{2}
       0<\mu_n^- &\leq \mu_n(x,\omega)\leq \mu_n^+<\infty\\
       0<\mu_p^- &\leq \mu_p(x,\omega)\leq \mu_p^+<\infty,
     \end{alignat*}
     where $\mu_p, \mu_n \in C^1(\DSi\times\Omega,
     \R^{2\times2})$.
     \label{ass7}
     \label{assump mu}
    Moreover, the inclusions $V_D \in H^{1/2}(\partial D )
    	\cap L^{\infty} (\Gamma)$ and $u_D,v_D \in H^{1/2}(\partial
    	\DSi)$ hold.
   \end{enumerate}
 \end{assumptions}
 
 		
\textbf{Remark 1.} \red{For all $\omega\in \Omega$, the matrix
  $A(\cdot,\omega)$ is uniformly elliptic, i.e., there is 
$\red{\hat{A}(\cdot,\omega) \in C^1(D, \R^{2\times2})}$ such that $A(\cdot,\omega)=\hat{A}(\cdot,\omega) *\hat{A}(\cdot,\omega)$.}
	 
 \section{Formulation of the  finite element method }
 	This section is devoted  to present a finite element method for solving \eqref{modeleqn} and separated into two subsections. In the first one, we obtain a  variational formulation and provide some results regarding the existence and uniqueness of the weak solutions. In the second subsection, the finite element discretization based on the mentioned variational formulation is presented. 
 	\subsection{The random variable dependent weak formulation }
  Assume $D \subset \mathbb{R}^2$  satisfies  Assumption \ref{assump1}  . To present the variational formulation of Equation \eqref{modeleqn},  we consider 
 	the Hilbert space  $L^2\left(
 	D\right) $  equipped with the inner product
 	\begin{equation*}
 	\left( u,v \right)_D := \int_D  u(x) v(x) dx
 	\qquad\forall u, v \in L^2(D)
 	\end{equation*}
 	and hence the norm		
 	\begin{equation*}
 	\left\| u \right\|_D = \left( \int_D u(x)^2dx \right)^{\frac{1}{2}}
 	\qquad \forall u \in L^2(D).
 	\end{equation*}
Also, we denote  the Hilbert space equipped with 
%
 the inner product
 $$  {\left( {u,v} \right)_{k,{D}}} := \sum\limits_{0 \le \left| m \right| \le k} {{{\left( {\int\limits_D  {D^mu(x)\,D^mv(x)dx} } \right)}}} $$ and 
 the norm 
  $$
   {\left\| u \right\|_{H^k(D) }}: = {\left( {\sum\limits_{\left| \alpha  \right| \le k} {\left\| {{D^\alpha }u} \right\|_{{L^2}\left( D  \right)}^2} } \right)^{\frac{1}{2}}}
  $$
by $H^k(D)$.
%
 
  Moreover, we define
\begin{align*}
X_1 &:= \{ V \in H^1(D) \mid V|_ {\partial D_D} =V_D \},\\
X_2 &:= \{ u \in H^1(D_{Si}) \mid u|_ {\partial D_{D,Si}} =u_D \},\\
X_3 &:= \{ v \in H^1(D_{Si}) \mid v|_ {\partial D_{D,Si}} =v_D \}.
\end{align*}

 The random variable dependent weak formulation of (\ref{modeleqn}a)--(\ref{modeleqn}d), i.e., the
 Poisson equation on $D$ and the drift-diffusion
 equations for electrons and holes on $\DSi$ for a fixed random variable $\omega\in\Omega$ can be
 written as
 {\fontsize{9.5}{9.5}
 \begin{subequations}\label{weak}
   \begin{alignat}{2}
  \left(\hat{A}\nabla V(\omega),\hat{A}\nabla \varphi_1\right)_{\DSi}&=\left( q{C(\omega)},\varphi_1  \right)_{\DSi} - {\left( {q{n_i}{e^{{V(\omega) \over {{U_T}}}}}u(\omega) - q{n_i}{e^{ - {V(\omega) \over {{U_T}}}}}v(\omega),\varphi_1 } \right)_{\DSi}}
       &\qquad&\forall\varphi_1  \in X_1,\\  
   \left( \hat{A} \nabla V(\omega), \hat{A} \nabla \varphi_1  \right)_{\Dox} &= 0 &&\forall\varphi_1  \in X_1,\\
     - \left( {{U_T}{\mu _n}{e^{{{V\left( {\omega } \right)} \over {{U_T}}}}}\nabla   u\left( {\omega } \right),\nabla \varphi_2 } \right)_{{\DSi}} &= {\left( {{{u(\omega)v(\omega) - 1} \over {\tau_p\left( {{e^{{V(\omega) \over {{U_T}}}}}u(\omega) + 1} \right) + \tau_n\left( {{e^{ - {V(\omega) \over {{U_T}}}}}v(\omega) + 1} \right)}},\varphi_2 } \right)_{{\DSi}}}&&\forall\varphi_2  \in X_2,\\	  		    	           	
     - {\left( {{U_T}{\mu _p}{e^{-{{V\left( {\omega } \right)} \over {{U_T}}}}}\nabla  v\left( {\omega } \right),\nabla \varphi_3 } \right)_{{\DSi}}} &= {\left( {{{u(\omega)v(\omega) - 1} \over {{\tau _p}\left( {{e^{{V(\omega) \over {{U_T}}}}}u(\omega) + 1} \right) + {\tau _n}\left( {{e^{ - {V(\omega) \over {{U_T}}}}}v(\omega) + 1} \right)}},\varphi_3 } \right)_{{\DSi}}}&&\forall\varphi_3  \in X_3,
   \end{alignat}
 \end{subequations}}
  	 We mention  the following lemma to state existence and uniqueness of the continuous solution. 
  \begin{lemma} \cite{taghizadeh2017optimal,Baumgartner2012existence}\label{wellposs}
  		Considering Assumptions~\ref{assump1}, for  $V_D \in H^{1/2}(\partial D)$ and $ u_D, v_D\in H^{1/2}(\partial D_{Si})$, there exists a unique random variable-dependent weak solution
  		\red{\begin{align*}
  	  			V(\cdot,\omega)\in H^1(D\,\backslash\, \Gamma)\,\cap\, L^\infty(D\,\backslash\, \Gamma),\qquad u(\cdot,\omega) \in H^1(D_{\text{Si}})\,\cap\, L^\infty(\text{D}_\text{Si}),
  	  			\qquad v(\cdot,\omega)\in H^1(D_{\text{Si}})\,\cap\, L^\infty(\text{D}_\text{Si})   
  	  			\end{align*}}		
  			for Eq. \eqref{weak}
  \end{lemma}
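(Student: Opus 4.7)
The plan is to work $\omega$-wise, since the formulation \eqref{weak} decouples the probabilistic and spatial structures: for each fixed $\omega \in \Omega$ it suffices to solve a deterministic coupled elliptic system, and measurability in $\omega$ is then inherited from the strong measurability of the data imposed in Assumption~\ref{assump1}. Following the Gummel-type fixed-point strategy used in \cite{Baumgartner2012existence,taghizadeh2017optimal}, I would define the map $T\colon (u,v) \mapsto (\tilde u, \tilde v)$ by first solving the Poisson subproblem (\ref{weak}a)--(\ref{weak}b) for a potential $V$ given $(u,v)$, then inserting this $V$ into the two continuity equations (\ref{weak}c)--(\ref{weak}d) to produce $(\tilde u,\tilde v)$. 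Any fixed point of $T$ is a weak solution of \eqref{weak}.

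For the Poisson step, with $u,v \in L^\infty(\DSi)$ positive and bounded, the right-hand side $q(C - n_i(e^{V/U_T}u - e^{-V/U_T}v))$ depends monotonically on $V$. By Remark~1 the coefficient $A$ factors as $\hat A\ast\hat A$, so the bilinear form is coercive; combined with monotonicity this yields a unique $V \in H^1(D\setminus \Gamma)$ via the Browder--Minty theory of monotone operators. The $L^\infty$-bound is obtained by a Stampacchia truncation using the uniform bounds on $C$ from Assumption~\ref{assump1}, and the transmission across $\Gamma$ is handled using the $C^2$-regularity of $\Gamma$ together with the piecewise $C^1$-regularity of $A$ on $D^\pm$.

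Given $V \in L^\infty(\DSi)$, the coefficients $\mu_n e^{V/U_T}$ and $\mu_p e^{-V/U_T}$ are bounded above and below by strictly positive constants. Equations (\ref{weak}c)--(\ref{weak}d) are then linear elliptic in $(\tilde u,\tilde v)$ after linearising the recombination term by a Picard sub-iteration (whose contraction is ensured by the strict positivity of the denominator of $R_\mathrm{SRH}$), so Lax--Milgram produces unique $H^1$-solutions with the prescribed Dirichlet data \eqref{dirichlet BCs}. A weak maximum principle, combined with the pointwise bound $1/K \le u_D,v_D \le K$ from Assumption~\ref{assump1}, shows that the order interval $[1/K,K]^2$ is invariant under $T$, so $\tilde u,\tilde v \in L^\infty(\DSi)$ with controlled bounds.

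Existence then follows from Schauder's fixed-point theorem on the closed, convex, bounded subset $\{(u,v) \in L^2(\DSi)^2 : 1/K \le u,v \le K\}$: continuity of $T$ is inherited from the stability of each subproblem with respect to its data, and compactness comes from the Rellich--Kondrachov embedding $H^1(\DSi)\hookrightarrow L^2(\DSi)$ applied to the uniformly $H^1$-bounded image. The main obstacle, and the part I expect to require the most care, is \emph{uniqueness}: the coupling is only partially monotone, so one must subtract two putative solutions and test with carefully weighted combinations (typically $V_1-V_2$, $u_1-u_2$ and $v_1-v_2$ scaled by appropriate exponentials of the potentials) to absorb the cross-terms, exploiting the strict positivity of the $R_\mathrm{SRH}$-denominator and the uniform $L^\infty$-bounds already established. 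This is precisely the line of argument carried out in \cite{Baumgartner2012existence} for the deterministic problem, which transfers to the present random-variable-dependent setting once all constants are tracked to be $\omega$-independent.
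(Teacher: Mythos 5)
The paper does not actually prove this lemma: it is stated with citations to \cite{taghizadeh2017optimal,Baumgartner2012existence} and the proof is entirely deferred to those references. Your sketch reproduces the strategy of the cited works --- the Gummel decoupling map, $L^\infty$ control via maximum principles and the bounds $1/K \le u_D, v_D \le K$ from Assumption~\ref{assump1}, and Schauder's fixed-point theorem on the order interval with compactness from Rellich--Kondrachov --- so as far as existence is concerned you are following essentially the same route the paper relies on.

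The one place where your sketch overreaches is uniqueness. What \cite{Baumgartner2012existence} (and the classical theory for the stationary van Roosbroeck system) actually establishes is \emph{local} uniqueness, i.e., uniqueness near thermal equilibrium or for sufficiently small applied voltages. The subtract-two-solutions-and-test argument you describe only closes when the cross terms generated by the exponential nonlinearities $e^{\pm V/U_T}$ can be absorbed into the coercive part, and that absorption requires a smallness condition on the applied bias; globally, the stationary drift-diffusion system is known to admit multiple solutions for certain operating conditions, so no choice of exponential weights yields unconditional uniqueness. If you present this proof you must state the condition under which the uniqueness step closes, since the lemma (and the references it cites) silently carries that restriction. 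A second, smaller gap: measurability of $\omega \mapsto (V,u,v)$ does not follow ``automatically'' from strong measurability of the coefficients when the solution is produced by a non-constructive Schauder fixed point; one needs either a measurable-selection argument or the (local) uniqueness itself to conclude that the solution map is measurable, so this step should be tied explicitly to the uniqueness you establish.
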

   \subsection{The random variable dependent finite element  formulation}
 Assume  $D$ is triangulated by a regular (in the sense of Ciarlet), \red{locally} quasi-uniform mesh $\mathcal{T}_\ell$
	with  mesh width $h_\ell$. Also, we assume that the mesh is $\gamma$-shape regular
	in the sense that $\text{diam}(T) \le  \gamma |T|^{1/2} $ for all $T \in \mathcal{T}_\ell$.
  Let $\mathcal{P}^1(T)$ be the space of first degree polynomials on $T \in \mathcal{T}_\ell$. Then, we define
\red{\begin{align}
X_{i,\ell}&:=\left\lbrace u \in X_i : u|_T\in \mathcal{P}_1(T)
            ~~\forall T \in  \mathcal{T}_\ell\right\rbrace  \qquad
            \forall i \in \{1,2,3\}. 
\end{align}}
   Hence, based on the weak formulation
  	(\ref{weak}), we arrive at the
  	random variable dependent  Galerkin discretization  of finding  $(V_\ell (\cdot , \omega),u_\ell(\cdot , \omega), v_\ell(\cdot , \omega)) \in (X_{1,\ell},X_{2,\ell},X_{3,\ell})$ such that 
  	{\fontsize{9.5}{9.5}}
 \begin{subequations}\label{weak1}
   \begin{alignat}{2}
     \left( {\hat{A} \nabla {V_\ell(\omega)}, \hat{A} \nabla \psi_1 } \right)_{\DSi} &= {\left( {q{C(\omega)},\psi_1 } \right)_{\DSi}} - {\left( {q{n_i}{e^{{{{V_\ell}} \over {{U_T}}}}}{u_\ell(\omega)} - q{n_i}{e^{ - {{{V_\ell(\omega)}} \over {{U_T}}}}}{v_\ell(\omega)},\psi_1 } \right)_{\DSi}}&\qquad&\forall\psi_1  \in {X_{1,l}},\\
    {\left( {\hat{A} \nabla {V_\ell(\omega)}, \hat{A} \nabla \psi_1 } \right)}_{\Dox} &= 0&&\forall\psi_1  \in {X_{1,l}},\\
     - {\left( {{U_T}{\mu _n}{e^{{{{V_\ell(\omega)}} \over {{U_T}}}}}\nabla {u_\ell(\omega)},\nabla \psi_2 } \right)_{{\DSi}}}&= {\left( {{{{u_\ell(\omega)}{v_\ell(\omega)} - 1} \over {{\tau _p}\left( {{e^{{{{V_\ell(\omega)}} \over {{U_T}}}}}{u_\ell(\omega)} + 1} \right) + \tau_n\left( {{e^{ - {{{V_\ell(\omega)}} \over {{U_T}}}}}{v_\ell(\omega)} + 1} \right)}},\psi_2 } \right)_{{\DSi}}}&&\forall\psi_2  \in { X_{2,\ell}},\\	
     - {\left( {{U_T}{\mu _p}{e^{-{{{V_\ell(\omega)}} \over {{U_T}}}}}\nabla {v_\ell(\omega)},\nabla \psi_3 } \right)_{{\DSi}}}  &= {\left( {{{{u_\ell(\omega)}{v_\ell(\omega)} - 1} \over {{\tau _p}\left( {{e^{{{{V_\ell(\omega)}} \over {{U_T}}}}}{u_\ell(\omega)} + 1} \right) + {\tau _n}\left( {{e^{ - {{{V_\ell(\omega)}} \over {{U_T}}}}}{v_\ell(\omega)} + 1} \right)}},\psi_3 } \right)_{{\DSi}}}&&\forall\psi_3  \in X_{3,\ell}.
   \end{alignat}
 \end{subequations}
 \red{The existence and uniquness of a FEM formulation of
   \eqref{weak1} is shown in \cite{zlamal1986finite}}.
  \section{ A priori and a posteriori error estimation}
 \label{Section 4}
 In this section, we derive both a-priori and a-posteriori
   error estimates. The main feature of an a-priori error estimate is
   providing knowledge about the asymptotic behavior of the
   discretization error.  Considering an a-posteriori error estimator,
   an adaptive mesh-refinement process consists of the following steps:
   \begin{enumerate}
   	\item 
   Define an initial mesh $\mathcal{T}_\ell$.
   Let $\ell=0$.
   	\item Solve the discrete problem \eqref{weak1} on $\mathcal{T}_0$.
   \item For each element $T \in \mathcal{T}_\ell$, obtain the computed error estimate.
   \item If the computed global error is sufficiently small, then
     stop. Otherwise, determine which elements have to be refined and
     obtain the next mesh $\ell+1$ and return to the second step (2).
   	\end{enumerate}

 For the sake of simplicity, from here we use $h$ instead of $h_\ell$ to denote the mesh width.
 \begin{theorem}[A-priori error estimate]
		\label{theo2}
		Let $(V(\omega),u(\omega),v(\omega))\in \left(H^2(D\setminus
		\Gamma)\cap L^\infty(D\setminus\Gamma)\right)\times\left(
		H^2(\DSi)\cap L^\infty(\DSi)\right)^2$ be the solution of
		(\ref{weak}) and $(V_\ell(\omega), u_\ell(\omega), v_\ell(\omega))\in
		X_{1,\ell}\times X_{2,\ell}\times X_{3,\ell}$ be the solution of (\ref{weak1}). Then, there exists a
		constant $c\in\mathbb{R}^+$ depending on the doping concentration as well as the shape regularity of the mesh such that the inequality
		\begin{align}
		\label{err}
		\left\| {\nabla} (V(\omega)-V_\ell(\omega)) \right\|_{L^2(\text{D})} ^2 + \left\| {\nabla} { (u(\omega)-u_\ell(\omega))} \right\|_{L^2(\DSi)} ^2 + \left\| {\nabla (v(\omega)-v_\ell(\omega))} \right\|_{L^2(\DSi)} ^2 \le c{h^{2}}\left( {\left\| u(\omega) \right\|}_{H^2(\DSi)} ^2 + \left\| v(\omega) \right\|_{H^2(\DSi)} ^2 + \left\| V(\omega) \right\|_{H^2(D)} ^2\right)
		\end{align}
		holds for a fixed random variable $\omega\in\Omega$.
	\end{theorem}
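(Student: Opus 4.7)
The plan is to follow a standard Galerkin-type $H^1$ error analysis, adapted to the nonlinear coupled system. The principal difficulty is that the diffusion coefficient $e^{\pm V/U_T}$ in each drift-diffusion equation depends on the unknown $V$, so subtracting \eqref{weak1} from \eqref{weak} does not yield clean Galerkin orthogonality in the principal parts.

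First, I would introduce the nodal Lagrange interpolants $\Pi_h V \in X_{1,\ell}$, $\Pi_h u \in X_{2,\ell}$, $\Pi_h v \in X_{3,\ell}$ and split
$$V-V_\ell = (V-\Pi_h V)+(\Pi_h V-V_\ell)=:\eta_V+\xi_V,$$
with analogous splittings for $u$ and $v$. On the $\gamma$-shape-regular mesh, standard interpolation theory gives $\|\nabla\eta_V\|_{L^2(D)}\le Ch\|V\|_{H^2(D\setminus\Gamma)}$ and similarly for $\eta_u,\eta_v$. Since $\xi_V,\xi_u,\xi_v$ lie in the discrete spaces, they are admissible test functions; I would subtract the continuous and discrete weak forms and test with these, obtaining three error identities to be summed.

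For the Poisson part, ellipticity of $A$ (Remark~1 and Assumption~\ref{assump1}) yields the coercive lower bound $c_A\|\nabla\xi_V\|_{L^2(D)}^2$. The right-hand side nonlinearities are controlled by exploiting the $L^\infty$ bounds on $V$ and $V_\ell$ from Lemma~\ref{wellposs} and rewriting
$$e^{V/U_T}u-e^{V_\ell/U_T}u_\ell = e^{V_\ell/U_T}(u-u_\ell)+(e^{V/U_T}-e^{V_\ell/U_T})u,$$
where on bounded sets the exponential is Lipschitz, so the second factor is bounded pointwise by $C|V-V_\ell|$. For each drift-diffusion equation, the key decomposition is
$$e^{V/U_T}\nabla u-e^{V_\ell/U_T}\nabla u_\ell = e^{V_\ell/U_T}\nabla(u-u_\ell)+(e^{V/U_T}-e^{V_\ell/U_T})\nabla u.$$
The first term, combined with the uniform lower bound $\mu_n^->0$, supplies the coercivity $\|\nabla\xi_u\|_{L^2(\DSi)}^2$ after writing $\nabla(u-u_\ell)=\nabla\eta_u+\nabla\xi_u$; the second term is a perturbation bounded pointwise by $C|V-V_\ell|\,|\nabla u|$, which is estimated by H\"older's inequality using $\nabla u\in L^4(\DSi)$ (from $u\in H^2$ and the two-dimensional Sobolev embedding). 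The recombination-rate differences are handled identically, noting that the denominator of $R_{\mathrm{SRH}}$ is uniformly bounded away from zero by positivity of $\tau_n,\tau_p$ together with the $L^\infty$ bounds on $u,v,V$.

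The main obstacle will be the absorption step. After summing the three tested error identities and applying Young's inequality with a small parameter $\varepsilon>0$, cross terms of the form $\|\nabla\xi_V\|\|\nabla\xi_u\|$ and $\|\xi_V\|\|\nabla\xi_u\|$ must be moved to the left-hand side; for this to be legitimate, $\varepsilon$ must be smaller than the coercivity constants, which depend exponentially on $\|V\|_\infty/U_T$ and on the doping concentration (hence the constant $c$ in the statement). Once the absorption succeeds, invoking the interpolation bounds on $\eta_V,\eta_u,\eta_v$ and applying the triangle inequality produces the stated $O(h^2)$ estimate. The argument is carried out pointwise in $\omega\in\Omega$ and parallels the deterministic analyses of \cite{chen1995analysis,jerome1995approximation}, with the extra bookkeeping required by the nonzero Shockley--Read--Hall recombination term.
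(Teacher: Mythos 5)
Your proposal follows essentially the same route as the paper's proof: split the error through a projection/interpolation operator into the discrete space, test the difference of the continuous and discrete weak forms with the discrete error components, decompose the exponential nonlinearities as $e^{V_\ell/U_T}(\cdot-\cdot)+(e^{V/U_T}-e^{V_\ell/U_T})(\cdot)$, control the recombination terms via Lipschitz continuity of the Shockley--Read--Hall quotient, absorb the cross terms, and finish with the approximation property and the triangle inequality. The only differences are technical (nodal interpolant versus $L^2$ projector, your explicit H\"older/$L^4$ treatment of $(e^{V/U_T}-e^{V_\ell/U_T})\nabla u$ versus the paper's Taylor-expansion-of-functionals bookkeeping for the recombination term), and your candid remark that the absorption step is the delicate point matches where the paper's own argument is thinnest.
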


 \begin{proof} For the sake of simplicity, the proof is done in four  steps.\\
 {\em Step 1:} In this step, we provide four equations containing the error terms. 	Let
 	\begin{align*}
 	e_{1,\ell}(\omega) &: = V_\ell(\omega) - \Pi_{1,\ell}V(\omega),\\
 	e_{2,\ell}(\omega) &: = u_\ell(\omega) - \Pi_{2,\ell}u(\omega),\\
 	e_{3,\ell}(\omega) &: = v_\ell(\omega) - \Pi_{2,\ell}v(\omega),
 	\end{align*}
 	where $\Pi_{1,\ell}$ is the $L^2$ projector of  $L^2(D)$ onto $X_{1,\ell}$ and $\Pi_{i,\ell}$  projects  $L^2(D_{Si})$ onto $X_{i,\ell}$ for $i=2,3$.
   To simplify the notations, we drop the random variable $\omega$ from the
   arguments of all functions in the following.
 
   Substituting $\varphi_1=e_{1,\ell}$ into (\ref{weak}a) and
   $\psi_1=e_{1,\ell}$ into (\ref{weak1}a) as well as subtracting these
   two equations, we have the following error equation
   \begin{align}\label{37}
     \nonumber
     {\left( {\hat{A} \nabla {e_{1,\ell}},\hat{A} \nabla  {e_{1,\ell}}}
     \right)_{\DSi}} &= qn_i\left( {\left( {{e^{\frac{{{\Pi_{1,\ell}}V}}{{{U_T}}}}}\Pi_{2,\ell}u -
                       {e^{\frac{{{V_\ell}}}{{{U_T}}}}}{u_\ell}} \right) -
                       \left( {{e^{ - \frac{{\Pi_{1,\ell}V}}{{{U_T}}}}}\Pi_{3,\ell}v - {e^{ - \frac{{{V_\ell}}}{{{U_T}}}}}{v_\ell}}
                       \right),{e_{1,\ell}}} \right)_{\DSi}  \\
     \nonumber
     &+ qn_i\left( {\left( {{e^{-\frac{{\Pi_{1,\ell}V}}{{{U_T}}}}}{\Pi
       _{3,\ell}}v - {e^{-\frac{{{V}}}{{{U_T}}}}}{v}} \right) - \left(
       {{e^{  \frac{{\Pi_{1,\ell}V}}{{{U_T}}}}}\Pi_{2,\ell}u - {e^{ 
       \frac{{{V}}}{{{U_T}}}}}{u}} \right),{e_{1,\ell}}}
       \right)_{\DSi}  \\
     &+{\left( {\hat{A} \nabla  \left( {V - \Pi_{1,\ell}V} \right),\hat{A} \nabla  {e_{1,\ell}}} \right)_{\DSi}}.
   \end{align}
   Substituting $\varphi_1=e_{1,\ell}$ into (\ref{weak}b) and
   $\psi_1=e_{1,\ell}$ into (\ref{weak1}b) as well as subtracting these
   two equations results in
   \begin{align}\label{L}
     {\left( {\hat{A} \nabla {e_{1,\ell}},\hat{A} \nabla {e_{1,\ell}}} \right)_{\Dox}} = {\left( {\hat{A} \nabla \left( {V - \Pi_{1,\ell}V} \right),\hat{A} \nabla  {e_{1,\ell}}} \right)_{\Dox}}.
   \end{align}
   Furthermore, substituting $\varphi_2=e_{2,\ell}$ into (\ref{weak}c) and
   $\psi_2=e_{2,\ell}$ into (\ref{weak1}c) and subtracting these two
   equations leads to
      {\fontsize{9.25}{9.25}
   \begin{align*}
     {\left( U_T\mu_n{{e^{\frac{V_\ell}{U_T}}}\nabla u_\ell - {e^{\frac{{{\Pi_{1,\ell} V}}}{{{U_T}}}}}\nabla {\Pi_{2,\ell} u},\nabla {e_{2,\ell}}} \right)_{{\DSi}}} =& {\left( {\frac{{1-u_\ell v_\ell}}{{{\tau_p}\left( {{e^{\frac{V_\ell}{U_T}}}u_\ell + 1} \right) + {\tau _n}\left( {{e^{-\frac{V_\ell}{U_T}}v_\ell+1}} \right)}} - \frac{1-\Pi_{2,\ell} u~\Pi_{3,\ell}v}{{{\tau _p}\left( {{e^{\frac{\Pi_{1,\ell}V}{U_T}}}{\Pi_{2,\ell} u} + 1} \right) + {\tau _n}\left( {{e^{ - \frac{\Pi_{1,\ell}V}{U_T}}}\Pi_{3,\ell}v + 1} \right)}},{e_{2,\ell}}} \right)_{{\DSi}}}\\                                                                 &+{\left( {\frac{{uv-1 }}{{{\tau _p}\left( {{e^{\frac{V}{{{U_T}}}}}u + 1} \right) + {\tau _n}\left( {{e^{ - \frac{V}{{{U_T}}}}}v}+1 \right)}} - \frac{{ \Pi_{2,h}u~\Pi_{3,\ell}v-1}}{{{\tau _p}\left( {{e^{\frac{{\Pi_{1,\ell}V}}{{{U_T}}}}}\Pi_{2,\ell}u + 1} \right) + {\tau_n}\left( {{e^{ - \frac{{\Pi_{1,\ell}V}}{{{U_T}}}}}\Pi_{3,\ell}v + 1} \right)}},{e_{2,\ell}}} \right)_{{\DSi}}}\\
&+\left(e^{\frac{V}{U_T}}\nabla u-e^{\frac{\Pi_{1,\ell} V}{U_T}} \nabla \Pi_{2,\ell} u,e_{2,h} \right)_{\DSi}.                                      
   \end{align*}}
   Again, substituting $\varphi_3=e_{3,\ell}$ into (\ref{weak}d) and
   $\psi_3=e_{3,\ell}$ into (\ref{weak1}d) and subtracting these two
   equations yields
    {\fontsize{9.25}{9.25}
   \begin{align*} 
     {\left( {U_T}{\mu _p}{{e^{-\frac{V_\ell}{U_T}}}\nabla  v_\ell - {e^{-\frac{\Pi_{1,\ell} V}{U_T}}}\nabla \Pi_{3,\ell} v,\nabla
     {e_{3,\ell}}} \right)_{{\DSi}}} &= {\left( {\frac{{1 - u_\ell v_\ell}}{{{\tau _p}\left({{e^{\frac{V_\ell}{U_T}}} u_\ell + 1}
           \right) + {\tau_n}\left( {{e^{-\frac{V_\ell}{{U_T}}}}v_\ell + 1} \right)}} - \frac{1 -\Pi_{2,\ell} u~\Pi_{3,\ell} v}{{{\tau _p}\left(
                                    {{e^{\frac{{{\Pi_{1,\ell} V}}}{{{U_T}}}}}{\Pi_{2,\ell}u}
                                    + 1} \right) + {\tau _n}\left(
                                    {{e^{ -
                                    \frac{{{\Pi_{1,\ell}V}}}{{{U_T}}}}}{\Pi_{3,\ell}v} +
                                    1} \right)}},{e_{3,\ell}}}
                                    \right)_{{\DSi}}}  \\
                                  &+{\left( {\frac{{ uv-1}}{{{\tau _p}\left( {{e^{\frac{V}{{{U_T}}}}}u + 1} \right) + {\tau_n}\left( {{e^{ - \frac{V}{{{U_T}}}}}v + 1} \right)}} - \frac{{  \Pi_{2,\ell}u~\Pi_{3,h}v - 1}}{{{\tau _p}\left( {{e^{\frac{{\Pi_{1,\ell}V}}{{{U_T}}}}}\Pi_{2,\ell}u + 1} \right) + {\tau_n}\left( {{e^{ - \frac{{\Pi_{1,\ell}V}}{{{U_T}}}}}\Pi_{3,\ell}v + 1} \right)}},{e_{3,\ell}}} \right)_{{\DSi}}}\\ 
    &+\left(e^{-\frac{V}{U_T}}\nabla v-e^{-\frac{\Pi_{1,\ell} V}{U_T}} \nabla \Pi_{3,\ell} v ,e_{2,\ell}\right)_{\DSi}.  
   \end{align*}}
{\em Step 2:} In this step, we simplify some  terms of the error  equations from the previous step. For this purpose, we 
 consider the following notations
\begin{align}
\label{simp1}
L_1(u,v,w)&:=e^{\frac{u}{U_T}}(v-w),\\
L_2(u,v,w,z)&:=\left(e^{\frac{u}{U_T}}-e^{\frac{v}{U_T}}\right)(w-z),\\
\label{simp2}
L_3(u,v,w)&:=\left(e^{\frac{u}{U_T}}-e^{\frac{v}{U_T}}\right)w.
\end{align}
Then, using the  notations from  \eqref{simp1}-\eqref{simp2}, we can write
 \begin{align}\label{24}\nonumber
e^{\frac{V_\ell}{U_T}}\nabla u_\ell-e^{\frac{\Pi_{1,\ell} V}{U_T}}\nabla \Pi_{2,\ell} u=& L_1(V,\nabla u_\ell,\nabla \Pi_{2,\ell} u)+L_2(V_\ell,\Pi_{1,\ell}V,\nabla u_\ell,\nabla \Pi_{2,\ell} u)\\
&+L_1(\Pi_{1,\ell} V,\nabla u_\ell,\nabla \Pi_{2,\ell} u) +L_3(\Pi_{1,\ell}V,V_\ell,\nabla \Pi_{2,\ell} u). 
\end{align}
and
 \begin{align}\label{25}\nonumber
 e^{\frac{-V_\ell}{U_T}}\nabla v_\ell-e^{\frac{-\Pi_{1,\ell} V}{U_T}}\nabla \Pi_{3,\ell} v=& L_1(-V,\nabla v_\ell,\nabla \Pi_{3,\ell} v)+L_2(-V_\ell,-\Pi_{1,\ell}V,\nabla v_\ell,\nabla \Pi_{3,\ell} v)\\
 &+L_1(-\Pi_{1,\ell} V,\nabla v_\ell,\nabla \Pi_{3,\ell} v) +L_3(-\Pi_{1,\ell}V,-V_\ell,\nabla \Pi_{3,\ell} v). 
 \end{align}
Also, It can be easily seen 
\begin{align}\label{26}
e^{\frac{V}{U_T}}u-e^{\frac{\Pi_{1,\ell}V}{U_T}}\Pi_{2,\ell}u=e^{\frac{\Pi_{1,\ell}V}{U_T}}(u-\Pi_{2,\ell} u)+(e^{\frac{V}{U_T}}-e^{\frac{\Pi_{1,\ell} V}{U_T}})u.
\end{align}
and 
\begin{align}\label{26a}
e^{\frac{-V}{U_T}}v-e^{\frac{-\Pi_{1,\ell}V}{U_T}}\Pi_{3,\ell}v=e^{\frac{-\Pi_{1,\ell}V}{U_T}}(v-\Pi_{3,\ell} v)+(e^{\frac{-V}{U_T}}-e^{\frac{-\Pi_{1,\ell} V}{U_T}})v.
\end{align}
Moreover,  for the sake of simplicity   we then define 

   \begin{equation*}
     {F}\left( {u,v,w} \right): = \frac{{1 - vw }}{{{\tau _p}\left( {{e^{\frac{u}{{{U_T}}}}}v + 1} \right) + {\tau_n}\left( {{e^{ - \frac{u}{{{U_T}}}}}w + 1} \right)}}.
   \end{equation*}
  Substituting (\ref{24})--(\ref{26}) into (\ref{37})--(\ref{L}),  together with the above notation
   we infer that
   \begin{align}\label{48}
 \nonumber\left\| {\hat{A} \nabla {e_{1,\ell}}} \right\|_{L^2(\text{D})} ^2 &+ \left({U_T}{\mu _n} L_1(V,\nabla u_\ell,\nabla \Pi_{2,\ell} u ),\nabla  e_{2,\ell} \right)_{\DSi} + \left(U_T\mu _p L_1(V,\nabla v_\ell,\nabla \Pi_{3,\ell} v) ,\nabla  e_{3,\ell} \right)_{\DSi}\\\nonumber&
 +\left(\mu _T \mu_n L_3(\Pi_{1,\ell}V,\nabla u_\ell,\nabla \Pi_{2,\ell} u),\nabla e_{2,h}\right)_{\DSi}  +  \left(\mu _T \mu _p L_3(-\Pi_{1,\ell}V,\nabla v_\ell,\nabla \Pi_{3,\ell} v),\nabla e_{3,\ell}\right)_{\DSi} 
 \\\nonumber
 =&
qn_i\left( {\left( {{e^{\frac{{{\Pi_{1,\ell}}V}}{{{U_T}}}}}\Pi_{2,\ell}u -
		{e^{\frac{{{V_\ell}}}{{{U_T}}}}}{u_\ell}} \right) -
	\left( {{e^{ - \frac{{\Pi_{1,\ell}V}}{{{U_T}}}}}\Pi_{3,\ell}v - {e^{ - \frac{{{V_\ell}}}{{{U_T}}}}}{v_\ell}}
	\right),{e_{1,\ell}}} \right)_{\DSi}  \\
\nonumber
&+ qn_i\left( {\left( {{e^{-\frac{{\Pi_{1,\ell}V}}{{{U_T}}}}}{\Pi
			_{3,\ell}}v - {e^{-\frac{{{V}}}{{{U_T}}}}}{v}} \right) - \left(
	{{e^{  \frac{{\Pi_{1,\ell}V}}{{{U_T}}}}}\Pi_{2,\ell}u - {e^{ 
				\frac{{{V}}}{{{U_T}}}}}{u}} \right),{e_{1,\ell}}}
\right)_{\DSi}  \\\nonumber
&+\left(\hat{A} \nabla (V-\Pi_{1,\ell}V),\hat{A} \nabla e_{1,\ell} \right)_{\DSi}+\left(F(V_\ell,u_\ell,v_\ell) -F(\Pi_{1,\ell} V,\Pi_{2,\ell} u,\Pi_{2,\ell}v),e_{2,\ell} \right)_{\DSi} \\\nonumber
 &+\left(F(V,u,v) -F(\Pi_{1,\ell} V,\Pi_{2,\ell} u,\Pi_{2,\ell}v),e_{2,\ell} \right)_{\DSi}+ \left(F(V_\ell,u_\ell,v_\ell) -F(\Pi_{1,\ell} V,\Pi_{2,\ell} u,\Pi_{2,\ell}v),e_{3,\ell} \right)_{\DSi} \\ 
	 & \nonumber+\left(F(V,u,v) -F(\Pi_{1,\ell} V,\Pi_{2,\ell} u,\Pi_{2,\ell}v),e_{3,\ell} \right)_{\DSi}+\left( e^{\frac{\Pi_{1,\ell} V}{U_T}} \nabla \Pi_{2,\ell} u -e^{\frac{V}{U_T}}\nabla u ,\nabla e_{2,\ell} \right)_{\DSi} \\ 
 &\nonumber +\left( e^{\frac{-\Pi_{1,\ell} V}{U_T}} \nabla \Pi_{2,\ell} v -e^{-\frac{V}{U_T}}\nabla v ,\nabla e_{3,\ell} \right)_{\DSi}-\left\lbrace \left(\mu _T \mu_n L_2(V_\ell,\Pi_{1,\ell}V,\nabla u_\ell,\nabla \Pi_{2,\ell} u)+L_1(\Pi_{1,\ell} V,\nabla u_\ell,\nabla \Pi_{2,\ell} u),\nabla e_{2,\ell}\right) _{\DSi}\right\rbrace 
 \\&-\left\lbrace \left(\mu _T \mu _p   L_2(-V_\ell,-\Pi_{1,\ell}V,\nabla v_\ell,\nabla \Pi_{3,\ell} v)+L_1(-\Pi_{1,\ell} V,\nabla v_\ell,\nabla \Pi_{3,\ell} v),\nabla e_{3,\ell}\right)_{\DSi} \right\rbrace .
  \end{align}

 {\em Step 3:} In this step, we strive to find an upper bound for the terms given in the right hand side of \eqref{48}.\\
We denote the functional  $M_1: X_{1,\ell}\times X_{1,\ell}\times X_{3,\ell}\rightarrow \mathbb{R}$ by
   \begin{equation*}
     M_1(\Pi_{1,\ell}V,V_h) (e_{3,\ell}):=\left(F(\Pi_{1,\ell} V,\Pi_{2,\ell} u,\Pi_{3,\ell} v)- F( V_\ell,u_\ell,v_\ell),e_{3,\ell}\right)_{\DSi}.
   \end{equation*}    
Moreover, we define
   \begin{align*}
   \theta_1:=\theta\, V_\ell+(1-\theta)\,\Pi_{1,\ell}V=\theta\, e_{1,\ell}+e_{1,\ell}-V_\ell=(1+\theta)\,e_{1,\ell}-V_\ell,
   \end{align*} 
    where $0<\theta<1$.  Using the Taylor expansion for the functional $M_1$ over $\Pi_{1,\ell} V$ leads to		

    \begin{equation}\label{expansion} 
         M_1(\Pi_{1,\ell}V,V_\ell) (e_{3,\ell})=
    M_1(\Pi_{1,\ell}V,\Pi_{1,\ell}V)(e_{3,\ell})+\int_{0}^{1}M_1^\prime \left(\Pi_{1,\ell} V,\theta_1 \right)(e_{3,\ell})~\text{d}\,\theta.
      \end{equation} 
 Since the  $F$ is differentiable, considering  the directional derivative of $M_1$ in the  direction of projection of $V$  leads to
   \begin{align} \label{directional}
        \nonumber
          M_1^\prime(\Pi_{1,\ell}V,\theta_1)(e_{3,\ell})&=\nonumber
        \lim\limits_{r\to
                                          0}\frac{M_1(\Pi_{1,\ell}V,\,\theta_1+r\,\Pi_{1,\ell}V)(e_{3,\ell})-M_1(\Pi_{1,\ell}V,\theta_1)(e_{3,\ell})}{r}\\\nonumber
    &\leq \lim\limits_{r\to 0}\frac{\left(F(\Pi V_\ell,\Pi_{2,\ell} u,\Pi_{3,\ell}               v)-F(\theta_1+r\Pi_{1,\ell} V,u_\ell,v_\ell),e_{3,\ell}\right)_{\DSi}}{r}\\\nonumber
                                        &- \lim\limits_{r\to
                                          0}\frac{\left(F(\Pi_{1,\ell} V,\Pi_{2,\ell} u,\Pi_{3,\ell} v)-F(\theta_1,u_\ell,v_\ell),e_{3,\ell}\right)_{\DSi}}{r}\\   
 &\leq c(\theta_1)\,|(e_{1,\ell},e_{3,\ell})_{\DSi}|.
      \end{align}
      Here, we should note that  $F$ is Lipschitz continuous (see Eq. (3.13) of \cite{zlamal1986finite})      
     and the constant	$c(\theta_1)$ is independent of the mesh width. Considering
   \eqref{expansion} and \eqref{directional}, we conclude that
   \begin{equation*}
 \left| M_1(\Pi_{1,\ell}V,V_\ell)(e_{3,\ell})\,\right|\leq\left|\left(F(\Pi_{1,\ell}V,\Pi_{2,\ell}u,\Pi_{2,\ell}
         v)-F(\Pi_{1,\ell} V,u_\ell,v_\ell),e_{3,\ell}\right)_{\DSi}\right|
     +\left|\int_{0}^{1} M_1^\prime(\Pi_{1,\ell}V,\theta_1)(e_{3,\ell})\, \text{d}\theta~\right|.
   \end{equation*}
   Therefore, Eq. \eqref{directional} and the triangle inequality give
   rise to
   \begin{align}\label{M1}
     \left|M_1(\Pi_{1,\ell} V,V_\ell) (e_{3,\ell})\right|\lesssim\left|(F(\Pi_{1,\ell}V,\Pi_{2,\ell}u,\Pi_{1,\ell}V)-F(\Pi_{1,\ell}V,u_\ell,v_\ell),e_{3,\ell})_{\DSi} \right|+\left|(e_{2,\ell},e_{3,\ell})_{\DSi}\right|.
   \end{align}	
   In the next step, we define the functional
   \begin{equation*}
    M_2(\Pi_{2,\ell}u,u_\ell)  (e_{3,\ell}):=\left(F(\Pi_{1,\ell}V,\Pi_{2,\ell}u,\Pi_{3,\ell}v)-F(\Pi_{1,\ell}V,u_\ell,v_\ell),e_{3,\ell}\right)_{\DSi}.
   \end{equation*}
   Similarly, we define the variable
\begin{align*}
\theta_2:=(1+\theta)e_{1,\ell}-u_\ell,
\end{align*}   
where $0<\theta<1$. Taylor expansion of $M_2$ over $\Pi_{2,\ell} u$ shows that
   \begin{equation}\label{MM2}
     M_2(\Pi_{2,\ell}u,u_\ell)(e_{3,\ell})= M_2(\Pi_{2,\ell} u,\Pi_{2,\ell}u)\,(e_{3,\ell})+\int_{0}^{1}M_2^\prime \left(\Pi_{2,\ell} u,\theta_2 \right)(e_{3,\ell})~\text{d}\,\theta.
   \end{equation}
    Again, since the function $f$ is differentiable, the directional derivative of $M_2$ is given by
 \begin{align} \label{directional1}
         \nonumber
           M_2^\prime(\Pi_{2,\ell}u,\theta_2)(e_{3,\ell})&=\nonumber
         \lim\limits_{r\to
                                           0}\frac{M_2(\Pi_{2,\ell}u,\,\theta_2+r\,\Pi_{2,\ell}u)(e_{3,\ell})-M_2(\Pi_{2,\ell}u,\theta_2)(e_{3,\ell})}{r}\\\nonumber
     &= \lim\limits_{r\to 0}\frac{\left(F(\Pi V_\ell,\Pi_{2,\ell} u,\Pi_{3,\ell}
                                            v)-F(V_\ell,\theta_2+r\Pi_{2,\ell}u,v_\ell),e_{3,\ell}\right)_{\DSi}}{r}\\\nonumber
                                         &- \lim\limits_{r\to
                                           0}\frac{\left(F(\Pi_{1,\ell} V,\Pi_{2,\ell} u,\Pi_{3,\ell} v)-F(V_\ell,\theta_2,v_\ell),e_{3,\ell}\right)_{\DSi}}{r}\\      
                                         &\leq c(\theta_2)\,|(e_{1,\ell},e_{3,\ell})_{\DSi}|,
       \end{align}
where again the constant $c(\theta_2)$ is independent of~$h$. Therefore, we have
   \begin{equation*}
\left|  M_2(\Pi_{2,\ell}u,u_\ell)(e_{3,\ell})\,\right|\leq\left|\left(F(\Pi_{1,\ell}V,\Pi_{2,\ell}u,\Pi_{3,\ell}
         v)-F(\Pi_{1,\ell} V,u_\ell,v_\ell),e_{3,\ell}\right)_{\DSi}\right|
     +\left|\int_{0}^{1} M_2^\prime(\Pi_{2,\ell}u,\theta_2)(e_{3,\ell})\, \text{d}\theta~\right|,
   \end{equation*}
and applying the triangle inequality to \eqref{MM2} results in
  \begin{equation}\label{inequ11} 
     \left|M_2(\Pi_{2,\ell} u,u_\ell)(e_{3,\ell})\right|\lesssim \left|\left(F(\Pi_{1,\ell}V,\Pi_{2,\ell}u,\Pi_{3,\ell}
              v)-F(\Pi_{1,\ell} V,u_\ell,v_\ell),e_{3,\ell}\right)_{\DSi}\right|+\left|(e_{1,\ell},e_{3,\ell})_{\DSi}\right|.
   \end{equation}
Now using the triangle inequality and applying the the approximation and the stability properties of   $\Pi_{1,\ell}$, $\Pi_{2,\ell}$, and $\Pi_{3,\ell}$, we have
\begin{align}\nonumber
|\left( L_2(V_\ell,\Pi_{1,\ell}V,\nabla u_\ell,\nabla \Pi_{2,\ell} u)+L_1(\Pi_{1,\ell} V,\nabla u_\ell,\nabla \Pi_{2,\ell} u),\nabla e_{2,\ell}\right) _{\DSi}|&\leq\left( \|e_{1,\ell}\|_{L^2(\DSi)} \|\nabla  e_{2,\ell} \|_{L^2(\DSi)}+ \|e_{1,\ell}\|_{L^2(\DSi)} \|\nabla  e_{3,\ell} \|_{L^2(\DSi)}  \right)\|\nabla e_{2,\ell} \|_{L^2(\DSi)}
\end{align}
as well as
 {\fontsize{9.5}{9.5}
 \begin{align}\nonumber
	|\left( L_2(-V_\ell,-\Pi_{1,\ell}V,\nabla v_\ell,\nabla \Pi_{3,\ell} v)+L_1(-\Pi_{1,\ell} V,\nabla v_\ell,\nabla \Pi_{3,\ell} v),\nabla e_{3,\ell}\right) _{\DSi}|&\leq\left( \|e_{1,\ell}\|_{L^2(\DSi)} \| \nabla  e_{3,\ell} \|_{L^2(\DSi)}+ \|e_{1,\ell}\|_{L^2(\DSi)} \|\nabla  e_{3,\ell} \|_{L^2(\DSi)}  \right)\|\nabla  e_{3,\ell} \|_{L^2(\DSi)}
	\end{align}}
and
 {\fontsize{9}{9}
 	\begin{align}\nonumber
 		|\left(L_3(\Pi_{1,\ell}V,V_\ell,\nabla \Pi_{2,\ell} u),\nabla e_{2,h}\right)_{\DSi}   +  \left(L_3(-\Pi_{1,\ell}V,-V_\ell,\nabla \Pi_{3,\ell} v),\nabla e_{3,\ell}\right)_{\DSi}  |&\lesssim \left(\|\nabla e_{2,h}\|_{L^2(\DSi)} \|\nabla \Pi_{2,\ell} u\|_{L^2(\DSi)}\right.\\ \nonumber
 		& \left. +\|\nabla e_{3,h} \|_{L^2(\DSi)} \|\nabla\Pi_{2,\ell}v \|_{L^2(\DSi)} \right)\|V_\ell-\Pi_{1,\ell} V\|_{L^2(\DSi)}\\& \nonumber\lesssim \left(\|\nabla e_{2,h}\|_{L^2(\DSi)} \|\nabla  u\|_{L^2(\DSi)}\right.\\ \nonumber
 		& \left. +\|\nabla e_{3,h} \|_{L^2(\DSi)} \|\nabla v \|_{L^2(\DSi)} \right)\|V_\ell-\Pi_{1,\ell} V\|_{L^2(\DSi)}
 		\end{align} }
 Moreover, using the approximation and stability properties of $\Pi_{i,\ell} u$, i=1,2,3, we can write
 \begin{align}
 \|(\hat{A} \nabla (V-\Pi_{1,\ell}V), \hat{A} \nabla e_{1,\ell}) \|_{L^2(\DSi)}&\lesssim h  \| \nabla e_{1,\ell}\|_{L^2(\DSi)}\| V\|_{H^{\blue{2}} ({\DSi})},
 \end{align}
 \begin{align}\nonumber
	\left| \left( e^{-\frac{\Pi_{1,\ell} V}{U_T}}  \Pi_{3,\ell} v -e^{-\frac{V}{U_T}}v , e_{1,\ell} \right)_{\DSi}\right|  &\lesssim \| \Pi_{1,\ell} V-V\|_{L^2(\DSi)}\| e_{1,\ell}\|_{L^2(\DSi)}\\
	\nonumber &+\|\Pi_{3,\ell} v-v\|_{L^2(\DSi)}\|  e_{1,\ell}\|_{L^2(\DSi)}\\&\nonumber\lesssim h \|V \|_{H^{1}(\text{D}_{\text{Si}})} \| e_{1,\ell}\|_{L^2(\DSi)}\\&+ h \|v \|_{H^{\blue{1}}(\text{D}_{\text{Si}})} \| e_{1,\ell}\|_{L^2(\DSi)}.
	\end{align}
  \begin{align}\nonumber
	\left| \left( e^{-\frac{\Pi_{1,\ell} V}{U_T}}  \Pi_{3,\ell} v -e^{-\frac{V_\ell}{U_T}}v_\ell , e_{1,\ell} \right)_{\DSi}\right|  &\lesssim \| \Pi_{1,\ell} V-V_\ell\|_{L^2(\DSi)}\| e_{1,\ell}\|_{L^2(\DSi)}\\
	\nonumber &+\|\Pi_{3,\ell} v-v _\ell\|_{L^2(\DSi)}\|  e_{1,\ell}\|_{L^2(\DSi)}\\&\lesssim h \|V \|_{H^{\blue{2}}(\text{D}_{\text{Si}})} \| e_{1,\ell}\|_{L^2(\DSi)},
	\end{align}
 \begin{align}
\nonumber \left| \left( e^{\frac{-\Pi_{1,\ell} V}{U_T}} \nabla \Pi_{3,\ell} v -e^{\frac{-V}{U_T}}\nabla v ,\nabla e_{3,\ell} \right)_{\DSi}\right| &\lesssim \| \Pi_{1,\ell} V-V\|_{L^2(\DSi)}\|\nabla e_{3,\ell}\|_{L^2(\DSi)}\\
\nonumber &+\| \nabla (\Pi_{3,\ell} v-v)\|_{L^2(\DSi)}\| \nabla e_{3,\ell}\|_{L^2(\DSi)}\\&\nonumber\lesssim h \|V \|_{H^{\blue{2}}(\text{D}_{\text{Si}})} \|\nabla e_{3,\ell}\|_{L^2(\DSi)}\\&+ h \|v \|_{H^{\blue{2}}(\text{D}_{\text{Si}})} \|\nabla e_{3,\ell}\|_{L^2(\DSi)}.
 \end{align}
 Also, we can get the similar results for $\left|\left( e^{\frac{\Pi_{1,\ell} V}{U_T}}  \Pi_{2,\ell} u -e^{\frac{V}{U_T}}u , e_{1,\ell} \right)_{\DSi} \right| $, $\left| \left( e^{\frac{\Pi_{1,\ell} V}{U_T}}  \nabla \Pi_{2,\ell} u-e^{\frac{V}{U_T}}\nabla u ,\nabla e_{2,\ell} \right)_{\DSi}\right| $ as well as  $ \left| \left( e^{\frac{\Pi_{1,\ell} V}{U_T}}  \Pi_{2,\ell} u -e^{\frac{V_\ell}{U_T}}u_\ell , e_{1,\ell} \right)_{\DSi}\right|$.
 \\ {\em Step 4:} 
Combination of the above arguments gives us  
\begin{align}\label{e2}\nonumber
\|\hat{A}~\nabla e_{1,\ell}\|_{L^2(\text{D})}^2+\|\nabla e_{2,\ell} \|^2_{L^2(\DSi)}+\|\nabla e_{3,\ell}\| ^2_{L^2(\DSi)} 
&\lesssim h^{\blue{2}} \left( \|V\|^2_{H^{\blue{2}} (\text{D}_{\text{Si}})}+  \|u\|^2_{H^{\blue{2}} (\text{D}_{\text{Si}})}+ \|v\|^2_{H^{\blue{2}} (\text{D}_{\text{Si}})}\right)
%
%
\end{align}
Finally, using the triangle inequality yields the desired results. 
 \end{proof}

 In the next step, we use a residual based a-posteriori error estimation technique to
 estimate the local error $\eta_T$ on each finite element $T\in
 \mathcal{T}_\ell$. The error indicator will serve as the foundation for a
 refinement strategy in order to control and minimize the errors in the
 Poisson equation (\ref{modeleqn}a--\ref{modeleqn}b) and in the
 continuity equations (\ref{modeleqn}c--\ref{modeleqn}d).

 		\label{cor3}
 		{\fontsize{9.2}{9.2} \begin{equation}\nonumber 
 			\end{equation}}
 
 \vspace{-1cm}
 \begin{theorem}[A-posteriori error estimate]
 			\label{theo3}
   For $\omega\in\Omega$ let $(V(\omega),u(\omega),v(\omega))\in \left(H^1(D\setminus
     \Gamma)\cap L^\infty(D\setminus\Gamma)\right)\times\left(
     H^1(\DSi)\cap L^\infty(\DSi)\right)^2$ be the solution of
   \eqref{weak} and $(V_\ell(\omega), u_\ell(\omega), v_\ell(\omega))\in
   X_{1,\ell}\times X_{2,\ell}\times X_{3,\ell}$ be the solution of
   \eqref{weak1}. Then, for sufficiently small $h$, there exist
   constants $c_i$, $i\in\{1,\ldots,6\}$, depending on the doping concentration as well as the shape regularity of the mesh such that
   \begin{align} \label{post}\nonumber
     \left\| {\nabla} \left( {{V_\ell} - V} \right)(\omega) \right\|_{L^2(\text{D})} ^2 + \left\|  \nabla\left( {{u_\ell} - u)}(\omega) \right) \right\|_{L^2(\DSi)} ^2 + \left\| \nabla \left( {{v_\ell} - v} \right)(\omega) \right\|_{L^2(\DSi)} ^2  &\le c_1\sum\limits_{{\zeta\in {\mathcal{T}_\ell}}} {h_{\zeta }^2{{\left\| {{r_1(\omega)}} \right\|^{\blu{2}}}_\zeta}}  + c_2\sum\limits_{{\zeta\in\TSi}} {h_{\zeta}^2{{\left\| {{r_2}(\omega)} \right\|^{\blu{2}}}_\zeta}}    \\\nonumber                                                                                                                                                       &+c_3\sum\limits_{{\zeta\in\TSi}} {h_{{\zeta }}^2{{\left\| {{r_3(\omega)}} \right\|^{\blu{2}}}_{\zeta}}}   + c_4\sum\limits_{\gamma  \in \partial\mathcal{T}_\ell } {\left| {{{\left[ {\hat{A} \frac{{\partial {V_\ell}}}{{\partial \nu }}} (\omega)\right]}_\gamma }} \right|^{\blu{2}}} h_\gamma ^2 \\
                                                                                                                                                            &+c_5 \sum\limits_{\gamma  \in \partial \TSi } {\left| {{{\left[ {{U_T}{\mu _n}\frac{{\partial {u_\ell}}}{{\partial \nu }}}(\omega) \right]}_\gamma }} \right|^{\blu{2}}} h_\gamma ^2+c_6\sum\limits_{\gamma  \in \partial \TSi } {\left| {{{\left[ {{U_T}{\mu _p}\frac{{\partial {v_\ell}}}{{\partial \nu }}}(\omega) \right]}_\gamma }} \right|^{\blu{2}} } h_\gamma ^2
   \end{align}
   holds for a fixed event $\omega\in \Omega$ where $\TSi:=\mathcal{T}_\ell\cap\DSi$ and
   \begin{align*}
     r_1(\omega)&:=- \nabla \cdot \left( {A\nabla {V_\ell(\omega)}} \right) - q{C(\omega)} + q{n_i}\left( {{e^{\frac{{{V_\ell(\omega)}}}{{{U_T}}}}}u_{\blue{\ell}}(\omega) - {e^{ - \frac{{{V_\ell(\omega)}}}{{{U_T}}}}}{v_\ell(\omega)}} \right),\\
     {r_2}(\omega)&: = {U_T}\nabla \cdot \left( {{\mu _n}{e^{\frac{{{V_\ell(\omega)}}}{{{U_T}}}}}\nabla {u_\ell(\omega)}} \right) - \frac{{{u_\ell(\omega)}{v_\ell(\omega)} - 1}}{{{\tau _p}\left( {{e^{\frac{{{V_\ell(\omega)}}}{{{U_T}}}}}{u_\ell(\omega)} + 1} \right) + {\tau _n}\left( {{e^{ - \frac{{{V_\ell(\omega)}}}{{{U_T}}}}}{v_\ell(\omega)} + 1} \right)}},\\
     {r_3}(\omega)&:= {U_T}\nabla \cdot  \left( {{\mu _p}{e^{-\frac{{{V_\ell(\omega)}}}{{{U_T}}}}}\nabla {v_\ell(\omega)}} \right) - \frac{{{u_\ell(\omega)}{v_\ell(\omega)} - 1}}{{{\tau _p}\left( {{e^{\frac{{{V_\ell(\omega)}}}{{{U_T}}}}}{u_\ell(\omega)} + 1} \right) + {\tau _n}\left( {{e^{ - \frac{{{V_\ell}(\omega)}}{{{U_T}}}}}{v_\ell(\omega)} + 1} \right)}}.
   \end{align*}
   Here $\zeta$ denotes the area of an element in $\mathcal{T}_\ell$ or
   $\TSi$, $\gamma$ denotes the boundary of the element, and the brackets
   $\left[\cdot\right]$ indicate the jump at the element boundary.
 \end{theorem}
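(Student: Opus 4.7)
The plan is to follow the standard residual-based a-posteriori framework, adapted to the coupled nonlinear system \eqref{weak}--\eqref{weak1} using the same Lipschitz machinery already developed in the proof of Theorem~\ref{theo2}. Fix $\omega\in\Omega$ and set $e_1:=V-V_\ell$, $e_2:=u-u_\ell$, $e_3:=v-v_\ell$. The starting point is to test the continuous weak formulation \eqref{weak} with $e_1,e_2,e_3$ and subtract the Galerkin identities \eqref{weak1} tested with Cl\'ement (or Scott--Zhang) quasi-interpolants $I_\ell e_i\in X_{i,\ell}$. This Galerkin-orthogonality step replaces $e_i$ on the right-hand side by $e_i-I_\ell e_i$, which is precisely what makes the local residuals and edge jumps appear after integration by parts.

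Next, I would split each error equation into a sum over elements $\zeta\in\mathcal{T}_\ell$ (or $\zeta\in\TSi$, depending on the equation) and integrate by parts element by element. The interior contribution on each $\zeta$ is exactly the strong residual $r_j(\omega)$ given in the statement. The boundary terms, once summed over all elements sharing an edge~$\gamma$, collapse into the jump terms $[\hat{A}\,\partial_\nu V_\ell]_\gamma$, $[U_T\mu_n\,\partial_\nu u_\ell]_\gamma$ and $[U_T\mu_p\,\partial_\nu v_\ell]_\gamma$; the smooth exponential factors $\e^{\pm V_\ell/U_T}$ are piecewise $C^1$ on each side of $\gamma$ and bounded by the $L^\infty$-bound from Lemma~\ref{wellposs}, hence can be absorbed into the constants $c_4,c_5,c_6$. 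Applying Cauchy--Schwarz, the local Cl\'ement estimates
\begin{equation*}
\|e_i-I_\ell e_i\|_{L^2(\zeta)}\lesssim h_\zeta \|\nabla e_i\|_{L^2(N_\zeta)},\qquad \|e_i-I_\ell e_i\|_{L^2(\gamma)}\lesssim h_\gamma^{1/2}\|\nabla e_i\|_{L^2(N_\gamma)},
\end{equation*}
together with the finite-overlap property of the patches $N_\zeta,N_\gamma$ guaranteed by shape regularity, produce exactly the six $h_\zeta^2$- and $h_\gamma^2$-weighted sums of squared residuals and jumps appearing on the right-hand side of \eqref{post}.

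The remaining contributions are the nonlinear cross-terms coming from $\e^{\pm V/U_T}u$, $\e^{\pm V/U_T}v$ and the Shockley--Read--Hall rate $F(V,u,v)$. These are handled exactly as in Theorem~\ref{theo2}: using the splittings $L_1,L_2,L_3$ introduced in \eqref{simp1}--\eqref{simp2} together with the Lipschitz property of $F$ from \cite{zlamal1986finite}, every such cross-term is bounded by a product of $L^2$-norms of the $e_i$'s with constants depending only on the uniform bound on $V$, on the doping bounds from Assumption~\ref{assump1}, and on the mobility bounds of Assumption~\ref{ass7}. The gradient-norm left-hand side is then produced by invoking the uniform ellipticity of $\hat{A}$ (Remark~1) and the positive lower bounds on $\mu_n,\mu_p$.

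The main obstacle is absorbing these nonlinear $L^2$-couplings, which do not naturally belong to the right-hand side of \eqref{post}: they must be moved back to the left-hand side. This is precisely where the hypothesis ``for sufficiently small $h$'' enters. Using Young's inequality together with a Poincar\'e--Friedrichs inequality (available because the Dirichlet boundary has positive Lebesgue measure by Assumption~\ref{assump1}.1), one estimates $\|e_i\|_{L^2}\lesssim \|\nabla e_i\|_{L^2}$ and, after rescaling the small parameter, absorbs the resulting terms into $\|\nabla e_1\|_{L^2(D)}^2+\|\nabla e_2\|_{L^2(\DSi)}^2+\|\nabla e_3\|_{L^2(\DSi)}^2$. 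Relabeling the surviving six constants as $c_1,\ldots,c_6$ then delivers \eqref{post}.
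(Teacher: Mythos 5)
Your proposal follows essentially the same route as the paper's proof: Galerkin orthogonality against a Scott--Zhang/Cl\'ement-type quasi-interpolant, element-wise integration by parts to expose the strong residuals $r_1,r_2,r_3$ and the flux jumps, Cauchy--Schwarz with local interpolation estimates to produce the $h_\zeta^2$- and $h_\gamma^2$-weighted sums, and the $L_1,L_2,L_3$/Lipschitz machinery of Theorem~\ref{theo2} to control the nonlinear couplings. The only divergence is the final absorption of the leftover $L^2$-error terms, which you justify via Young plus Poincar\'e--Friedrichs while the paper invokes an inverse estimate for sufficiently small $h$; neither version makes the role of $h$ in that absorption fully explicit, but the overall structure is identical.
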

 
 \begin{proof}
   In the following, the dependence of the solutions on the random
   variable~$\omega$ is not indicated in order to simplify notation.
   We first define
   \begin{equation*}
     {\varepsilon_{1,\ell}}: = V - {V_\ell},
     \qquad{\varepsilon_{2,\ell}}: = u - {u_\ell},
     \qquad{\varepsilon_{3,\ell}}: = v - {v_\ell}.
   \end{equation*}
   Using the test functions $\varphi_1 \in X_1$, $\varphi_2 \in X_2$, and
   $\varphi_3 \in X_3$, the weak formulation \eqref{weak} yields
   \begin{subequations}
     {\fontsize{9.2}{9.2}
     \begin{align}
       \label{weak11}\nonumber
       {\left( {\hat{A} \nabla  {\varepsilon_{1,\ell}},\hat{A} \nabla  {\varphi _1}} \right)_{\DSi}} =&~ {\left( {q{C},{\varphi _1}} \right)_{\DSi}} - {\left( {q{n_i}{e^{\frac{V}{{{U_T}}}}}u - q{n_i}{e^{\frac{{{V_\ell}}}{{{U_T}}}}}{u_\ell},{\varphi _1}} \right)_{\DSi}}  \\
       +&~{\left( {q{n_i}{e^{ - \frac{V}{{{U_T}}}}}v - q{n_i}{e^{ - \frac{{{V_\ell}}}{{{U_T}}}}}{v_\ell},{\varphi _1}} \right)_{\DSi}} - {\left( {q{n_i}{e^{\frac{{{V_\ell}}}{{{U_T}}}}}{u_\ell} - q{n_i}{e^{ - \frac{{{V_\ell}}}{{{U_T}}}}}{v_\ell},{\varphi _1}} \right)_{\DSi}}- {\left( {\hat{A} \nabla  {V_{h}},\hat{A} \nabla  {\varphi _1}} \right)_{\DSi}},\\   \label{weak12}
       {\left( {\hat{A} \nabla  {\varepsilon_{1,\ell}},\hat{A} \nabla  {\varphi _1}} \right)_{\Dox}} =&~  - {\left( {\hat{A} \nabla  {V_\ell},\hat{A} \nabla  {\varphi _1}} \right)_{\Dox}}, \\\nonumber
        {\left( {{U_T}\mu_n{e^{\frac{V}{{{U_T}}}}}\nabla   u - {U_T}{\mu_n}{e^{\frac{{{V_\ell}}}{{{U_T}}}}}\nabla  {u_\ell},\nabla  {\varphi _2}} \right)_{{\DSi}}} =-& ~{\left( {{U_T}\mu _n{e^{\frac{{{V_\ell}}}{{{U_T}}}}}\nabla  {u_\ell},\nabla  {\varphi _2}} \right)_{{\DSi}}} \\\nonumber
       -&~{\left( {\frac{{uv - 1}}{{{\tau _p}\left( {{e^{\frac{V}{{{U_T}}}}}u + 1} \right) + \tau_n\left( {{e^{ - \frac{V}{{{U_T}}}}}v + 1} \right)}} - \frac{{{u_\ell}{v_\ell} - 1}}{{{\tau_p}\left( {{e^{\frac{{{V_\ell}}}{{{U_T}}}}}{u_\ell} + 1} \right) + \tau_n\left( {{e^{ - \frac{{{V_\ell}}}{{{U_T}}}}}{v_\ell} + 1} \right)}},{\varphi _2}} \right)_{{\DSi}}} \\  \label{weak13}
       -&~{\left( {\frac{{{u_\ell}{v_\ell} - 1}}{{\tau_p\left( {{e^{\frac{{{V_\ell}}}{{{U_T}}}}}{u_\ell} + 1} \right) + {\tau _n}\left( {{e^{ - \frac{{{V_\ell}}}{{{U_T}}}}}{v_\ell} + 1} \right)}},{\varphi _2}} \right)_{{\DSi}}},\\\nonumber
        {\left( {{U_T}{\mu _p}{e^{\frac{-V}{{{U_T}}}}}\nabla   v - {U_T}{\mu _p}{e^{\frac{{{-V_\ell}}}{{{U_T}}}}}\nabla  {v_\ell},\nabla  {\varphi _3}} \right)_{{\DSi}}} =-&~ {\left( {{U_T}{\mu _p}{e^{\frac{{{-V_\ell}}}{{{U_T}}}}}\nabla  {v_\ell},\nabla  {\varphi _3}} \right)_{{\DSi}}} \\\nonumber
       -&~	{\left( {\frac{{uv - 1}}{{{\tau_p}\left( {{e^{\frac{V}{{{U_T}}}}}u + 1} \right) + \tau_n\left( {{e^{ - \frac{V}{{{U_T}}}}}v + 1} \right)}} - \frac{{{u_\ell}{v_\ell} - 1}}{{{\tau _p}\left( {{e^{\frac{{{V_\ell}}}{{{U_T}}}}}{u_\ell} + 1} \right) + {\tau _n}\left( {{e^{ - \frac{{{V_\ell}}}{{{U_T}}}}}{v_\ell} + 1} \right)}},{\varphi _3}} \right)_{{\DSi}}}\\ \label{weak14}
       -&~	{\left( {\frac{{{u_\ell}{v_\ell} - 1}}{{{\tau _p}\left( {{e^{\frac{{{V_\ell}}}{{{U_T}}}}}{u_\ell} + 1} \right) + {\tau_n}\left( {{e^{ - \frac{{{V_\ell}}}{{{U_T}}}}}{v_\ell} + 1} \right)}},{\varphi _3}} \right)_{{\DSi}}}.
     \end{align}}
   \end{subequations}
   Let $I_{i,\ell}$ be the projection operator on $X_{i,\ell}$ defined in Theorem 4.8.7 of \cite{brenner2007mathematical}.
   
   Next we substitute $\psi_1:=I_{1,\ell}{\varphi_1} $ into (\ref{weak1}a) and
   (\ref{weak1}b), $\psi_2:=I_{2,\ell}{\varphi _2} $ into (\ref{weak1}c), and
   $\psi_3:=I_{3,\ell}{\varphi _3}$ into (\ref{weak1}d), and sum up the
   equations. Then we subtract them 
    from (\ref{weak11})--(\ref{weak14}), which leads to
 {\fontsize{9.4}{9.4}
   \begin{equation}\label{aa}
     \begin{array}{l}
       {\left( {\hat{A} \nabla  {\varepsilon_{1,\ell}},\hat{A} \nabla  {\varphi _1}} \right)_{L^2(\text{D})} } + {\left( {{U_T}{\mu_n}{e^{\frac{V}{{{U_T}}}}}\nabla   u - {U_T}{\mu_n}{e^{\frac{{{V_\ell}}}{{{U_T}}}}}\nabla  {u_\ell},\nabla  {\varphi _2}} \right)_{{\DSi}}} - {\left( {{U_T}{\mu_p}{e^{-\frac{V}{{{U_T}}}}}\nabla   v - {U_T}{\mu _p}{e^{-\frac{{{V_\ell}}}{{{U_T}}}}}\nabla  {v_\ell},\nabla  {\varphi _3}} \right)_{{\DSi}}}\\[3 mm]
       + {\left( {q{n_i}{e^{\frac{V}{{{U_T}}}}}u - q{n_i}{e^{\frac{{{V_\ell}}}{{{U_T}}}}}{u_\ell},{\varphi _1}} \right)_{\DSi}} + {\left( {q{n_i}{e^{ - \frac{V}{{{U_T}}}}}v - q{n_i}{e^{ - \frac{{{V_\ell}}}{{{U_T}}}}}{v_\ell},{\varphi _1}} \right)_{\DSi}} 
       = {\left( {q{C},{\varphi _1} - {I_{1,\ell}}{\varphi _1}} \right)_{\DSi}} - {\left( {q{n_i}{e^{\frac{{{V_\ell}}}{{{U_T}}}}}{u_\ell} - q{n_i}{e^{ - \frac{{{V_\ell}}}{{{U_T}}}}}{v_\ell},{\varphi _1} - {I_{1,\ell}}{\varphi _1}} \right)_{\DSi}} \\[3 mm]
       -{\left( {\hat{A} \nabla  {V_\ell},\hat{A} \nabla  \left( {{\varphi _1} - {I_{1,\ell}}{\varphi _1}} \right)} \right)_{\Dox} }  + 
       {\left( {{U_T}\mu_n{e^{\frac{{{V_\ell}}}{{{U_T}}}}}\nabla  {u_\ell},\nabla  \left( {I_{2,\ell}{\varphi _2} - {\varphi _2}} \right)} \right)_{{\DSi}}} + {\left( {{U_T}{\mu _p}{e^{-\frac{{{V_\ell}}}{{{U_T}}}}}\nabla  {v_\ell},\nabla  \left( {I_{3,\ell}{\varphi _3} -{\varphi_3}} \right)} \right)_{{\DSi}}}  \\[3 mm]
       -{\left( {\frac{{uv - 1}}{{\tau_p\left( {{e^{\frac{V}{{{U_T}}}}}u + 1} \right) + \tau_n\left( {{e^{ - \frac{V}{{{U_T}}}}}v + 1} \right)}} - \frac{{{u_\ell}{v_\ell} - 1}}{{{\tau _p}\left( {{e^{\frac{{{V_\ell}}}{{{U_T}}}}}{u_\ell} + 1} \right) + \tau_n\left( {{e^{ - \frac{{{V_\ell}}}{{{U_T}}}}}{v_\ell} + 1} \right)}},{\varphi _2}} \right)_{{\DSi}}} - 
       {\left( {\frac{{uv - 1}}{{\tau_p\left( {{e^{\frac{V}{{{U_T}}}}}u + 1} \right) + \tau_n\left( {{e^{ - \frac{V}{{{U_T}}}}}v + 1} \right)}} - \frac{{{u_\ell}{v_\ell} - 1}}{{\tau _p\left( {{e^{\frac{{{V_\ell}}}{{{U_T}}}}}{u_\ell} + 1} \right) + \tau_n\left( {{e^{ - \frac{{{V_\ell}}}{{{U_T}}}}}{v_\ell} + 1} \right)}},{\varphi _3}} \right)_{{\DSi}}}  \\[3 mm]
       +{\left( {\frac{{{u_\ell}{v_\ell} - 1}}{{{\tau _p}\left( {{e^{\frac{{{V_\ell}}}{{{U_T}}}}}{u_\ell} + 1} \right) + {\tau_n}\left( {{e^{ - \frac{{{V_\ell}}}{{{U_T}}}}}{v_\ell} + 1} \right)}},{I_{2,\ell}\varphi _2} - {\varphi _2}} \right)_{{\DSi}}} + {\left( {\frac{{{u_\ell}{v_\ell} - 1}}{{{\tau _p}\left( {{e^{\frac{{{V_\ell}}}{{{U_T}}}}}{u_\ell} + 1} \right) + \tau_n\left( {{e^{ - \frac{{{V_\ell}}}{{{U_T}}}}}{v_\ell} + 1} \right)}},{I_{2,\ell}\varphi _3} - {\varphi _3}} \right)_{{\DSi}}}.
     \end{array}
   \end{equation} }
 
   Explicit error estimation involves the direct computation of the
   interior element residuals and the jumps at the element boundaries
   to find an estimate for the error. Now using Green's theorem and the
   Cauchy-Schwarz inequality on \eqref{aa} leads to 
 {\fontsize{9.25}{9.25}
   \begin{align}\label{31}\nonumber
      & {\left( {\hat{A} \nabla {\varepsilon_{1,\ell}},\hat{A} \nabla {\varphi _1}} \right)_{L^2(\text{D})} }- {\left( {{U_T}{\mu _n}{e^{\frac{V}{{{U_T}}}}}\nabla u - {U_T}{\mu _n}{e^{\frac{{{V_\ell}}}{{{U_T}}}}}\nabla {u_\ell},\nabla {\varphi _2}} \right)_{{\DSi}}} - {\left( {{U_T}{\mu _n}{e^{-\frac{V}{{{U_T}}}}}\nabla v - {U_T}{\mu _n}{e^{-\frac{{V_\ell}}{{{U_T}}}}}\nabla {v_\ell},\nabla {\varphi _3}} \right)_{{\DSi}}}\\[1 mm]\nonumber
      & + {\left( {q{n_i}{e^{\frac{V}{{{U_T}}}}}u - q{n_i}{e^{\frac{{{V_\ell}}}{{{U_T}}}}}{u_\ell},{\varphi _1}} \right)_{\DSi}} + {\left( {q{n_i}{e^{ - \frac{V}{{{U_T}}}}}v - q{n_i}{e^{ - \frac{{{V_\ell}}}{{{U_T}}}}}{v_\ell},{\varphi _1}} \right)_{\DSi}} \le \sum\limits_{{\zeta\in\mathcal{T}_\ell}} {{{\left\| {{r_1}} \right\|}_{{\zeta}}}} {\left\| {{\varphi _1} - {I_{1,\ell}}{\varphi _1}} \right\|_{{\zeta}}} + \sum\limits_{{\zeta\in\TSi}} {{{\left\| {{r_2}} \right\|}_{{\zeta _i}}}} {\left\| {{\varphi _2} - {I_{2,\ell}}{\varphi _2}} \right\|_{{\zeta}}}  \\[1  mm]
      & +\sum\limits_{{\zeta\in\TSi}} {{{\left\| {{r_3}} \right\|}_{{\zeta }}}} {\left\| {{\varphi _3} - {I_{2,\ell}}{\varphi _3}} \right\|_{{\zeta}}} + \sum\limits_{\gamma  \in \partial\mathcal{T}_\ell } {\left| {{{\left[ {\hat{A} \frac{{\partial {V_\ell}}}{{\partial \nu }}} \right]}_\gamma }} \right|} {\left\| {{\varphi _1} - {I_{1,\ell}}{\varphi _1}} \right\|_{{\gamma}}} + \sum\limits_{\gamma  \in \partial\TSi } {\left| {{{\left[ {{U_T}{\mu _n}\frac{{\partial {u_\ell}}}{{\partial \nu }}} \right]}_\gamma }} \right|} {\left\| {{\varphi _2} - {I_{2,\ell}}{\varphi _2}} \right\|_\gamma } 
       +\sum\limits_{\gamma  \in \partial\TSi } {\left| {{{\left[ {{U_T}{\mu _p}\frac{{\partial {v_\ell}}}{{\partial \nu }}} \right]}_\gamma }} \right|} {\left\| {{\varphi _3} - {I_{3,\ell}}{\varphi _3}} \right\|_\gamma },
   \end{align}}
   where
   \begin{align*}
     r_1: &=  - \nabla \cdot \left( {A\nabla {V_\ell}} \right) - q{C} + q{n_i}\left( {{e^{\frac{{{V_\ell}}}{{{U_T}}}}}u_{\ell} - {e^{ - \frac{{{V_\ell}}}{{{U_T}}}}}{v_\ell}} \right),\\
     {r_2}:& = {U_T}\nabla \cdot  \left( {{\mu _n}{e^{\frac{{{V_\ell}}}{{{U_T}}}}}\nabla {u_\ell}} \right) - \frac{{{u_\ell}{v_\ell} - 1}}{{{\tau _p}\left( {{e^{\frac{{{V_\ell}}}{{{U_T}}}}}{u_\ell} + 1} \right) + {\tau _n}\left( {{e^{ - \frac{{{V_\ell}}}{{{U_T}}}}}{v_\ell} + 1} \right)}},\\
     {r_3}: &= {U_T}\nabla  \cdot \left( {{\mu _n}{e^{-\frac{{{V_\ell}}}{{{U_T}}}}}\nabla {v_\ell}} \right) - \frac{{{u_\ell}{v_\ell} - 1}}{{{\tau _p}\left( {{e^{\frac{{{V_\ell}}}{{{U_T}}}}}{u_\ell} + 1} \right) + {\tau _n}\left( {{e^{ - \frac{{{V_\ell}}}{{{U_T}}}}}{v_\ell} + 1} \right)}}.
   \end{align*}
   It can be easily seen that  
   \begin{subequations}
     \begin{align}\label{a1}
       {e^{\frac{V}{{{U_T}}}}} u - {e^{\frac{{{V_\ell}}}{{{U_T}}}}}{u_\ell} &= {e^{\frac{V}{{{U_T}}}}}{\varepsilon_{2,\ell}} - \left( {{e^{\frac{V}{{{U_T}}}}} - {e^{\frac{{{V_\ell}}}{{{U_T}}}}}} \right){\varepsilon_{2,\ell}} + \left( {{e^{\frac{V}{{{U_T}}}}} - {e^{\frac{{{V_\ell}}}{{{U_T}}}}}} \right)u,\\\label{a2}
       {e^{\frac{V}{{{U_T}}}}}\nabla   u - {e^{\frac{{{V_\ell}}}{{{U_T}}}}}\nabla  {u_\ell} &= {e^{\frac{V}{{{U_T}}}}}\nabla  {\varepsilon_{2,\ell}} - \left( {{e^{\frac{V}{{{U_T}}}}} - {e^{\frac{{{V_\ell}}}{{{U_T}}}}}} \right)\nabla  {\varepsilon_{2,\ell}} + \left( {{e^{\frac{V}{{{U_T}}}}} - {e^{\frac{{{V_\ell}}}{{{U_T}}}}}} \right)u,\\\label{a3}
       {e^{\frac{V}{{{U_T}}}}} v - {e^{-\frac{{{V_\ell}}}{{{U_T}}}}}{v_\ell} &= {e^{-\frac{V}{{{U_T}}}}}{\varepsilon_{3,\ell}} - \left( {{e^{-\frac{V}{{{U_T}}}}} - {e^{-\frac{{{V_\ell}}}{{{U_T}}}}}} \right){\varepsilon_{3,\ell}} + \left( {{e^{-\frac{V}{{{U_T}}}}} - {e^{-\frac{{{V_\ell}}}{{{U_T}}}}}} \right)v,\\\label{a4}
       {e^{-\frac{V}{{{U_T}}}}}\nabla   v - {e^{-\frac{{{V_\ell}}}{{{U_T}}}}}\nabla  {v_\ell} &= {e^{-\frac{V}{{{U_T}}}}}\nabla  {\varepsilon_{3,\ell}} - \left( {{e^{-\frac{V}{{{U_T}}}}} - {e^{-\frac{{{V_\ell}}}{{{U_T}}}}}} \right)\nabla  {\varepsilon_{3,\ell}} + \left( {{e^{-\frac{V}{{{U_T}}}}} - {e^{-\frac{{{V_\ell}}}{{{U_T}}}}}} \right)v.
     \end{align}
   \end{subequations}
   Now substituting $\varphi_i={\varepsilon_{i,\ell}}$ for $i \in
   \{1,2,3\}$ into \eqref{31} and using (\ref{a1}--\ref{a4}), Eqs. \eqref{M1},  
   \eqref{inequ11}, \cite[Corollary 4.8.15 on page 123]{brenner2007mathematical}, and Cauchy-Schwarz inequality  yield
   \begin{align}\nonumber
     {\left\|\nabla  \varepsilon_{1,\ell} \right\|}_{L^2(\text{D})}^2+{\left\|\nabla  \varepsilon_{2,\ell} \right\|}_{L^2(\DSi)}^2+{\left\|\nabla  \varepsilon_{3,\ell} \right\|}_{L^2(\DSi)}^2
      \lesssim&~\sum\limits_{{\zeta\in\mathcal{T}_\ell}} {h_{{\zeta}}^2{{\left\| {{r_1}} \right\|^{\blu{2}}}_{{\zeta}}}}  +\sum\limits_{{\zeta\in\TSi}} {h_{{\zeta}}^2{{\left\| {{r_2}} \right\|^{\blu{2}}}_{{\zeta}}}}  +\sum\limits_{{\zeta\in\TSi}} {h_{{\zeta}}^2{{\left\| {{r_3}} \right\|^{\blu{2}}}_{{\zeta}}}}  + \sum\limits_{\gamma  \in \partial\mathcal{T}_\ell } {\left| {{{\left[ {\hat{A} \frac{{\partial {V_\ell}}}{{\partial \nu }}} \right]}_\gamma }} \right|^{\blu{2}}} h_\gamma ^2  \\\nonumber
     +&	\sum\limits_{\gamma  \in \partial\TSi } {\left| {{{\left[ {{U_T}{\mu _n}\frac{{\partial {u_\ell}}}{{\partial \nu }}} \right]}_\gamma }} \right|^{\blu{2}}} h_\gamma ^2 + \sum\limits_{\gamma  \in \partial\TSi } {\left| {{{\left[ {{U_T}{\mu _p}\frac{{\partial {v_\ell}}}{{\partial \nu }}} \right]}_\gamma }} \right|^{\blu{2}}} h_\gamma ^2\\
    +& {{{\left\| \varepsilon_{1,\ell} \right\|}_{L^2(\DSi)}^2 }}+ {{{\left\| \varepsilon_{2,\ell} \right\|}_{L^2(\DSi)}^2 }}+{{{\left\| \varepsilon_{3,\ell} \right\|}_{L^2(\DSi)}^2 }},
   \end{align}
 where $\TSi:=\mathcal{T}_\ell\cap\DSi$.  
%
   Finally, applying an inverse estimate, for sufficiently small~$h$ we get
   \begin{align}\label{eq:indi} \nonumber
       {\left\|{\nabla} \varepsilon_{1,\ell} \right\|}_{L^2(\text{D})}^2+{\left\|  {\nabla}\varepsilon_{2,\ell} \right\|}_{L^2(\DSi)}^2+{\left\| {\nabla} \varepsilon_{3,\ell} \right\|}_{L^2(\DSi)}^2   \le&~ c_1\sum\limits_{{\zeta\in\mathcal{T}_\ell}} {h_{{\zeta}}^2{{\left\| {{r_1}} \right\|^{\blu{2}}}_{{\zeta}}}}  + c_2\sum\limits_{{\zeta\in\TSi}} {h_{{\zeta}}^2{{\left\| {{r_2}} \right\|^{\blu{2}}}_{{\zeta}}}}  + c_3\sum\limits_{{\zeta\in\TSi}} {h_{{\zeta}}^2{{\left\| {{r_3}} \right\|^{\blu{2}}}_{{\zeta}}}}  + c_4\sum\limits_{\gamma  \in \partial\mathcal{T}_\ell } {\left| {{{\left[ {\hat{A} \frac{{\partial {V_\ell}}}{{\partial \nu }}} \right]}_\gamma }} \right|^{\blu{2}}} h_\gamma ^2  \\
     &~+	c_5\sum\limits_{\gamma  \in \partial\TSi } {\left| {{{\left[ {{U_T}{\mu _n}\frac{{\partial {u_\ell}}}{{\partial \nu }}} \right]}_\gamma }} \right|^{\blu{2}}} h_\gamma ^2 + c_6\sum\limits_{\gamma  \in \partial\TSi } {\left| {{{\left[ {{U_T}{\mu _p}\frac{{\partial {v_\ell}}}{{\partial \nu }}} \right]}_\gamma }} \right|^{\blu{2}}} h_\gamma ^2,
   \end{align}
   which concludes the proof.
 \end{proof}

 \section{Multilevel Monte Carlo Finite-Element Method}
  
 In a Monte Carlo finite-element (FE) method several evaluations
 are combined   to obtain an approximation of the solution of
 the model equation or equations.
 \red{Considering the \blue{Bochner} space $L^2(\Omega;X)$ for the mappings $\mathcal{Y}\colon \Omega \to X$, we can define
 \begin{align}\label{bochner norm}
 	\| \mathcal{Y} \|_{L^2(\Omega;X)} :=\Big( \int_\Omega \| \mathcal{Y}(\cdot,\omega) \|_X^2 \rmd\P(\omega)
 	 	\Big)^{1/2}
 	 	= \E\Big[\| \mathcal{Y}(\cdot,\omega) \|_X^2 \Big]^{1/2}.
 	\end{align}}
  The standard MC estimator $\EMC$ for
 $\mathbb{E}[u_\ell]$ is the sample mean
 \begin{equation}\label{MC-estimator}
   \EMC[u_\ell]
   := \hat{u}_\ell
   := \frac{1}{M} \sum_{i=1}^{M} u_\ell^{(i)},
 \end{equation}
 where $u_\ell^{(i)} = u_\ell(x,\omega^{(i)})$ is the $i$-th sample (independent random variable) of the
 solution~$u$.
 	
 
In a multilevel Monte Carlo (MLMC) method, instead of calculating the expected value $\mathbb{E}[u]$ by
 $\mathbb{E}[u_\ell]$ on a constant triangulation $\mathcal{T}_\ell$, the
 MLMC method approximates the expected value $\mathbb{E}[u]$ using
 several $\mathbb{E}[u_\ell]$, $\ell \in \{0,1,\ldots,L\}$,
 estimated on the nested family $\{\mathcal{T}_{{\ell}}
 \}_{{\ell}=0}^{\infty}$. In fact, to overcome the drawback of the MC
 method, the MLMC estimator avoids prohibitively many expensive
 evaluations of $\mathbb{E}[u_\ell]$ on the finest level~$L$.
 
 The FE approximation of the expected value of $u_{L}$ at level~$L$
 can be written as
 \begin{equation}\label{MLMC}
   {\mathbb{E}}[u_{L}]
   ={\mathbb{E}}[u_{0}]+{\mathbb{E}}\left[\sum_{{\ell}=1}^{L}(u_{{\ell}}-u_{{{\ell}-1}})\right]
   ={\mathbb{E}}[u_{0}]+\sum_{{\ell}=1}^{L}{\mathbb{E}}[u_{{\ell}}-u_{{{\ell}-1}}].
 \end{equation}
 Therefore, for different numbers $M_\ell$ of samples at level $\ell
 \in \{0,\ldots,L\}$, we have
 \begin{equation*}
   \EMLMC[u]
   := \hat{u}_{L}
   =\frac{1}{M_0}\sum_{i=1}^{M_0}u_{0}^{(i)}
   +\sum_{{\ell}=1}^{L}\frac{1}{M_{\ell}}\sum_{i=1}^{M_{\ell}}(u_{{\ell}}^{(i)}-u_{{{\ell}-1}}^{(i)}).
 \end{equation*}
 The mean square error (MSE) is estimated by
 \begin{equation}\label{MLMC error1}
   \begin{split}
     \MSE 
     &\leq M_0^{-1} \sigma^2[ u_{0} ]
     + \sum_{\ell=1}^{L} M_\ell^{-1} \sigma^2[ u_{\ell} - u_{{\ell-1}} ]
     +\| \E[u_{L}]  -\E[u] \|^2_{L^2(\Omega;D)},
   \end{split}
 \end{equation}
 as shown for example in \cite{taghizadeh2017optimal}, where the
 variance is given by $\sigma[u]^2 := \|\E[u]-u\|_{L^2(\Omega;D)}^2$. The
 first and second terms of \eqref{MLMC error1} are the statistical
 error, while the last term is the discretization error.
 In the next section, we study the effect of mesh refinement, i.e.,
 uniform refinement  and adaptive
 refinement (using the error indicator \eqref{eq:indi}) on both terms
 of errors.

Here, we strive to use the a priori and a posteriori error
  estimates obtained above   for the stochastic
  drift-diffusion-Poisson system. To do so,
	we consider  the convexity of the norms and employ Jensen's
        inequality as well as the Poincar\'e inequality. Therefore,
        taking the expectation in Theorems~\ref{theo2} and~\ref{theo3}
        with respect to a random variable leads to the following corollaries.

 \begin{corollary}
 		\label{cor1}
 	The error bound
 		{\fontsize{9.4}{9.4} \begin{equation}\label{error11}
 			\left\| {  {\E[ e_{1,\ell}]}} \right\|_{L^2(D)}^2  + \left\| { {\E[ e_{2,\ell}]}} \right\|_{L^2(\DSi)}^2+
 			\left\| {\E[{e_{3,\ell}]}} \right\|_{L^2(\DSi)}^2\leq  \E\left(\left\| {  {e_{1,\ell}}} \right\|_{L^2(\text{D})} ^2 + \left\| {{e_{2,\ell}}} \right\|_{L^2(\DSi)} ^2 + \left\| {{e_{3,\ell}}} \right\|_{L^2(\DSi)} ^2 \right) \lesssim {h^{2}}\red{\mathbb{E}}\left( {\left\| V \right\|_{H^2 (\text{D}_{\text{Si}})} ^2 +\left\| u \right\|_{H^2 (\text{D}_{\text{Si}})} ^2 + \left\| v \right\|_{H^2 (\text{D}_{\text{Si}})} ^2   } \right),
                      \end{equation}}
                    holds for  a priori error estimation.
 	\end{corollary}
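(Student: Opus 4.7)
The plan is to derive Corollary~\ref{cor1} as a routine consequence of Theorem~\ref{theo2}, combined with Jensen's inequality and a Poincar\'e-type estimate, with the expectation taken in the final step.

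First, for the leftmost inequality in \eqref{error11}, I would apply Jensen's inequality to the convex functional $\|\cdot\|_{L^2}^2$ on the Bochner space $L^2(\Omega;L^2(D))$ defined in \eqref{bochner norm}. This immediately yields $\|\E[e_{i,\ell}]\|_{L^2}^2 \le \E\bigl[\|e_{i,\ell}\|_{L^2}^2\bigr]$ for each $i \in \{1,2,3\}$; summing the three resulting bounds produces the first inequality exactly as stated.

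Second, for the remaining inequality I would work pathwise in $\omega$. Because the continuous and discrete solutions share the same Dirichlet data on $\partial D_D$ and $\partial D_{D,\mathrm{Si}}$ (which have positive Lebesgue measure by Assumption~\ref{assump1}), each error $e_{i,\ell}(\cdot,\omega)$ vanishes on the corresponding Dirichlet piece. The Poincar\'e inequality, whose constant depends only on the deterministic geometry of $D$ and $\DSi$, then gives
\begin{equation*}
\|e_{i,\ell}(\cdot,\omega)\|_{L^2}^2 \,\lesssim\, \|\nabla e_{i,\ell}(\cdot,\omega)\|_{L^2}^2
\end{equation*}
for every $\omega\in\Omega$. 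Applying Theorem~\ref{theo2} pathwise then bounds the sum of squared gradient norms by $h^2\bigl(\|V(\omega)\|_{H^2}^2+\|u(\omega)\|_{H^2}^2+\|v(\omega)\|_{H^2}^2\bigr)$, with a constant independent of $\omega$ under Assumption~\ref{assump1}.

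Finally, I would take the expectation of this pointwise inequality, using the strong measurability of the solution map guaranteed by Lemma~\ref{wellposs} together with Tonelli's theorem to justify interchanging expectation and spatial integration. The argument is essentially mechanical and I do not expect a genuine obstacle: the only point requiring care is confirming the $\omega$-independence of both the Poincar\'e constant and the constant in Theorem~\ref{theo2}, which is immediate from the deterministic assumptions on the domain $D$, the subdomain $\DSi$, and the shape regularity of the mesh $\mathcal{T}_\ell$.
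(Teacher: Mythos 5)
Your proposal matches the paper's own argument: the authors likewise obtain the first inequality from Jensen's inequality (convexity of the squared norm), pass from $L^2$ norms of the errors to gradient norms via the Poincar\'e inequality using the shared Dirichlet data, and then take the expectation in Theorem~\ref{theo2}. The extra care you take over the $\omega$-independence of the constants and the measurability needed for Tonelli is a welcome refinement of, not a departure from, the paper's route.
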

 \begin{corollary}
 		\label{coro2}
 	 	The error bound
 		{\fontsize{9.7}{9.7}	\begin{align} 		\label{coro21}\nonumber
 			\left\| {  {\E[\varepsilon_{1,\ell}]}} \right\|_{L^2(D)}^2  + \left\| { {\E[\varepsilon_{2,\ell}]}} \right\|_{L^2(\DSi)}^2+
 			\left\| {\E[{\varepsilon_{3,\ell}]}} \right\|_{L^2(\DSi)}^2&\leq  \E\left(\left\| {   {\varepsilon_{1,\ell}}} \right\|_{L^2(\text{D})} ^2 + \left\| {{\varepsilon_{2,\ell}}} \right\|_{L^2(\DSi)} ^2 + \left\| {{\varepsilon_{3,\ell}}} \right\|_{L^2(\DSi)} ^2 \right) \blue{\le} \\\nonumber
 			&~~~c_1\sum\limits_{{\zeta\in\mathcal{T}_\ell}}  h_\zeta^2 \E\left[ \| r_1(\omega)\, \|_\zeta^2 \right]+ c_2\sum\limits_{{\zeta\in\TSi}} {h_{{\zeta}}^2 \E\left[ \| r_2 (\omega)\,\|_\zeta^2 \right]}  + c_3\sum\limits_{{\zeta\in\TSi}} {h_{{\zeta}}^2\E\left[ \| r_3 (\omega)\,\|_\zeta^2 \right]}\\ \nonumber &+ c_4\sum\limits_{\gamma  \in \partial\mathcal{T}_\ell } {\E\,\left| {{{\left[ {\hat{A} \frac{{\partial {V_\ell}}}{{\partial \nu }}}(\omega) \right]}_\gamma }}\, \right|^2} h_{\gamma} ^2  
 			~+	c_5\sum\limits_{\gamma  \in \partial\TSi } \E\,{\left| {{{\left[ {{U_T}{\mu _n}\frac{{\partial {u_\ell}}}{{\partial \nu  }}} (\omega)\right]}_\gamma }}\, \right|^2} h_{\gamma} ^2 \\
 			&+ c_6\sum\limits_{\gamma  \in \partial\TSi } {\E\left| {{{\left[ {{U_T}{\mu _p}\frac{{\partial {v_\ell}}}{{\partial \nu }}} \right]}_\gamma }} \right|^2} h_{\gamma} ^2,
 			\end{align}}
                      holds for a posteriori error estimation.
 	\end{corollary}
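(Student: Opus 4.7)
The plan is to derive the stated bound in three short steps, reducing everything to Theorem~\ref{theo3} applied $\omega$-wise and then integrating in the probability measure. First I would establish the left inequality of \eqref{coro21} by Jensen's inequality: for each component $i\in\{1,2,3\}$ and for any Bochner-integrable $X(\cdot,\omega)\in L^2(\Omega;L^2(\cdot))$, convexity of $\|\cdot\|_{L^2}^2$ combined with Jensen gives $\|\E[X]\|_{L^2}^2\le \E[\|X\|_{L^2}^2]$. Summing these three inequalities over $i=1,2,3$ produces the first inequality of \eqref{coro21}.

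Next I would apply Theorem~\ref{theo3} pointwise in $\omega\in\Omega$. Under Assumptions~\ref{assump1} the constants $c_1,\ldots,c_6$ depend only on the doping bounds $\underline C,\overline C$ and on the shape regularity of $\mathcal{T}_\ell$; in particular they are deterministic, and the smallness condition on $h$ is independent of $\omega$. Therefore, for every $\omega$,
\begin{align*}
\|\nabla\varepsilon_{1,\ell}(\omega)\|_{L^2(D)}^2 &+ \|\nabla\varepsilon_{2,\ell}(\omega)\|_{L^2(\DSi)}^2 + \|\nabla\varepsilon_{3,\ell}(\omega)\|_{L^2(\DSi)}^2 \\
&\le c_1\!\sum_{\zeta\in\mathcal{T}_\ell} h_\zeta^2\|r_1(\omega)\|_\zeta^2 + c_2\!\sum_{\zeta\in\TSi} h_\zeta^2\|r_2(\omega)\|_\zeta^2 + c_3\!\sum_{\zeta\in\TSi} h_\zeta^2\|r_3(\omega)\|_\zeta^2 \\
&\quad + c_4\!\sum_{\gamma\in\partial\mathcal{T}_\ell} h_\gamma^2\bigl|[\hat A\,\partial_\nu V_\ell(\omega)]_\gamma\bigr|^2 + c_5\!\sum_{\gamma\in\partial\TSi} h_\gamma^2\bigl|[U_T\mu_n\partial_\nu u_\ell(\omega)]_\gamma\bigr|^2 \\
&\quad + c_6\!\sum_{\gamma\in\partial\TSi} h_\gamma^2\bigl|[U_T\mu_p\partial_\nu v_\ell(\omega)]_\gamma\bigr|^2.
\end{align*}

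Finally I would take expectation on both sides. Linearity of $\E$ commutes with the finite sums over elements and faces, the mesh widths $h_\zeta,h_\gamma$ are deterministic, and the constants $c_i$ pull out; this gives the right-hand side of \eqref{coro21}. The only mild subtleties are (i) verifying that the residuals $r_j(\omega)$ and boundary jumps are in $L^2(\Omega)$, which follows from the well-posedness in Lemma~\ref{wellposs} together with the uniform bounds in Assumptions~\ref{assump1}, so that Fubini/Tonelli legitimizes the exchange of $\E$ with the sums and integrals, and (ii) ensuring the Poincaré-type estimate used to pass from the $\nabla\varepsilon_{i,\ell}$ norms on the left of \eqref{coro21} to the $L^2$ norms of $\varepsilon_{i,\ell}$; this uses that $\varepsilon_{1,\ell}$ vanishes on $\partial D_D$ (of positive measure by Assumption~\ref{assump1}) and $\varepsilon_{2,\ell},\varepsilon_{3,\ell}$ vanish on $\partial D_{D,\mathrm{Si}}$, so Poincaré's inequality applies with a deterministic constant. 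The main obstacle is precisely this integrability/measurability bookkeeping; once it is in place the result is essentially a monotone average of the pointwise bound of Theorem~\ref{theo3}.
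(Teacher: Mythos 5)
Your proposal is correct and follows essentially the same route the paper indicates for Corollary~\ref{coro2}: Jensen's inequality (convexity of the squared norm) for the first inequality in \eqref{coro21}, the Poincar\'e inequality (valid since the errors vanish on the Dirichlet parts of the boundary) to pass from the $L^2$ norms of $\varepsilon_{i,\ell}$ to the gradient norms controlled by Theorem~\ref{theo3}, and then taking the expectation of the pointwise-in-$\omega$ bound with deterministic constants. The measurability and integrability bookkeeping you add is a welcome elaboration of what the paper leaves implicit.
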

 	 Corollary \ref{coro2} makes it possible to estimate the a \red{posteriori} error estimator for
 	the expected value of the solutions. Marking strategies such as the
 	D\"orfler strategy \cite{dorfler1996convergent} can now be used to
 	drive mesh adaptivity.
       
 In order to estimate the computational errors, we continue with the
 degrees of freedom. It enables us to draw a fair comparison between
 adaptive MLMC-FE and uniform MLMC-FE methods. According to the error
 bound given in Corollary \ref{cor1}, we define the discretization error as  
 \begin{equation}\label{disc}
   \mathcal{E}_\ell:=  \left(\left\| {  {\E[e_{1,\ell}]}} \right\|_{L^2(D)}^2  + \left\| { {\E[e_{2,\ell}]}} \right\|_{L^2(\DSi)}^2+
     \left\| {\E[{e_{3,\ell}]}} \right\|_{L^2(\DSi)}^2\right)^{1/2}.
 \end{equation}
Furthermore, at level~$\ell$,  we assume that
 \begin{equation}
   \mathcal{E}_\ell^2 \leq C_1 \mathcal{N}_\ell ^{-2\alpha}
   :=C_1\left(\mathcal{N}_{\mathcal{P}_\ell}+2\mathcal{N}_{\mathcal{D}_\ell}\right)^{-2\alpha}
   \label{eq:l2error}
 \end{equation}
where $\mathcal{N}_{\mathcal{P}_\ell}$ is the number of
 unknowns or the (degrees of freedom) for the Poisson equation and
 $\mathcal{N}_{\mathcal{D}_\ell}$ indicates the number of unknowns for
 the two continuity equations. The exponent~$\alpha$ is the
 convergence rate of the error. For the statistical error, the following
 inequality
 \begin{equation}
   \sigma^2[\Delta V_{h_\ell}]
   +  \sigma^2[\Delta u_{h_\ell}]
   +  \sigma^2[\Delta v_{h_\ell}]
   \le C_2\mathcal{N}_\ell^{-\beta}
   \label{eq:variance}
 \end{equation}
 is assumed to show the convergence of the statistical error (at level~$\ell$). For $\ell=0$, the assumption
 \begin{equation*}
   \sigma^2[\Delta V_{h_0}]
   +  \sigma^2[\Delta u_{h_0}]
   +  \sigma^2[\Delta v_{h_0}]
   \le C_0
 \end{equation*}
 is used as well.
 
 Due to the computational challenge of solving a system of
 SPDEs, an effective computational
 strategy is crucial. We strive to determine the optimal number
 $M_{\ell}$ of samples which minimize the computational work when $\MSE \le \varepsilon^2$. In other words, the optimal number of samples are defined such that the statistical error is less than $\varepsilon^2/2$. The optimal value of $L$ (the lowest possible number) determined in the sense that the discretization error ($\mathcal{E}_L$) is less than $\varepsilon/\hspace{-0.05cm}\sqrt{2}$. 
  For this,  the following
 optimization problem is solved
 \begin{equation}
   \begin{aligned}
     & \underset{M_\ell}{\text{minimize}}
     & & f(M_\ell) :=
     \sum_{\ell=0}^{L}  M_\ell\mathcal{N}_\ell,\\
     & \text{subject to}
     & & g(M_\ell):= \frac{ C_{0}} { M_0} + C_2 \sum_{\ell=1}^{L} \frac{\mathcal{N}_\ell^{-\beta}  }{M_\ell }   \leq  \frac{\varepsilon^2}{2},
     \label{op:MLMC}
   \end{aligned}
 \end{equation}
 where the optimization is over all $M_\ell>0$. Moreover, since the
 optimal numbers $M_\ell$ of samples at level~$\ell$ are in general not
 integers, they are rounded up and replaced by $\lceil M_\ell \rceil$. The details of the optimal approach were given in Giles's MLMC paper \cite{giles2009multilevel}.
 
 \section{Numerical Example and Results}
  	 	 	
 In this work, we have chosen a double-gate MOSFET (DG-MOSFET) as a
 realistic example to implement the multilevel adaptive method
 developed above and to investigate its behavior. In these
 semiconductor transistors, the width of the silicon channel is very
 small and two gate contacts are used in the both sides of the channel
 to control the channel efficiently \cite{taur2004continuous,
   wong1997self, munteanu2006quantum}. Hence, the current can
 potentially be twice the current through a single-gate device, since
 inversion layers can exist at both gates. This device structure
 suppresses short-channel effects and leads to higher currents as
 compared to the usual MOSFET structure having only one gate. In
   this work we assumed that there is no charge fluctuation in the
   oxide layer (see \ref{modeleqn}b). However, due to charges in the
   dielectric subdomain and its boundaries (interface to the channel
   region), the right-hand side in Eq.~\ref{modeleqn}b can also be
   assumed to be non-zero. In digital and non-digital applications, the sensitivity of nanowire conduction to trapping and de-trapping of charges at interface states can be considered as well.
  	 	 	 	 
 The FET device (see Figure \ref{fig:device} for a schematic diagram)
 consists of two materials, namely silicon ($\DSi$) in the channel and
 source and drain regions and silicon dioxide ($\Dox$) as the
 insulator. The purpose of the insulator is to suppress direct charge
 flow from the gate into the channel and vice versa. The permittivities
 of the materials are $A_\text{Si} = 11.7 A_0$ and $A_\text{ox} = 3.9
 A_0$, where the vacuum permittivity (dielectric constant) is
 $\unit{8.85 \cdot 10^{-12}}{F m^{-1}}$. Moreover, the gates have a
 length of $\unit{30}{nm}$ and are separated from the silicon channel
 by a $\unit{2}{nm}$ thick oxide layer. The channel width is
 $W=\unit{15}{nm}$ and it is connected to the heavily n-type doped
 source and drain regions of length $L_\text{SD} := \unit{10}{nm}$ in each
 region.
 
 Regarding the boundary conditions of the model equations, we apply Dirichlet
 boundary conditions at the gates
 ($V_\text{g}=\unit{0.2}{V}$) and,  the source-to-drain voltage is
 $V_\text{SD}=\unit{0.1}{V}$. Also, the thermal voltage is $U_T=\unit{0.026}{V}$.
  The contacts are illustrated in Figure
 \ref{fig:device}. For the rest of the transistor, we apply zero Neumann boundary
 conditions. In the highly doped source and drain regions, the
 dopant atoms are randomly distributed and indicated in the figure by
 blue circles. Therefore, random-dopant effects are included due to the
 random position of the dopant atoms.
  	 	 	 	 	   
 In the common models, the doping concentration is modeled as a
 macroscopic, deterministic quantity which averages out any microscopic
 non-uniformities due to the random placement and random number of
 dopants. For large devices with a large number of dopants, this
 continuum model is physically reasonable, since the electrostatic
 potential appears spatially homogeneous and is sufficiently well
 described by the averaged charge density. However, in nanoscale
 transistors, the randomly distributed dopant atoms lead to inevitable variations between the billions of transistors in an integrated
 circuit. 
 
  	 	 	 	 	  
 As mentioned already, the main source of device variation is the
 random motion of impurity atoms during the fabrication procedure of
 implantation and annealing. In order to model the stochastic
 coefficients in the model equations, each dopant is modeled as a
 Gaussian distribution such that the doping concentration at point~$x$
 is given by \cite{chen2010modeling}
 \begin{equation}
   \label{dirac}
   C(x,\omega) := \sum_j
   \frac{C_j}{\left(2\pi\sigma^2\right)^{3/2}}
   \exp\left( - \frac{(x-x_j(\omega))^2}{2\sigma^2} \right),
 \end{equation}
 where $x_j$ and $C_j$ are the position and the charge of the $j$-th
 dopant, respectively.  In the source and drain, to determine the position of random dopant, two random points (according to the two-dimensional problem) are used to translate it. For instance, the random variable $\omega=(\frac{1}{2},\frac{1}{2})$ transforms the dopants to the center of a region ($x_j(\omega)$). Here, we assume that both regions have same equal number of dopants.
 Also, $\sigma:=\unit{0.35}{nm}$ corresponds to
 the extent of the electrostatic influence, and the results are not
 significantly sensitive to the value of~$\sigma$. Finally, the source and drain
 regions contain n-type dopants corresponding to a continuous doping
 concentration of $\unit{1\cdot10^{19}}{cm^{-3}}$ (heavily doped) and
 the doping concentration of the channel is
 $\unit{1\cdot10^{16}}{cm^{-3}}$.
 
 
 \begin{figure}[ht!]
   \centering
   \subfloat{\includegraphics[width=0.5\linewidth]{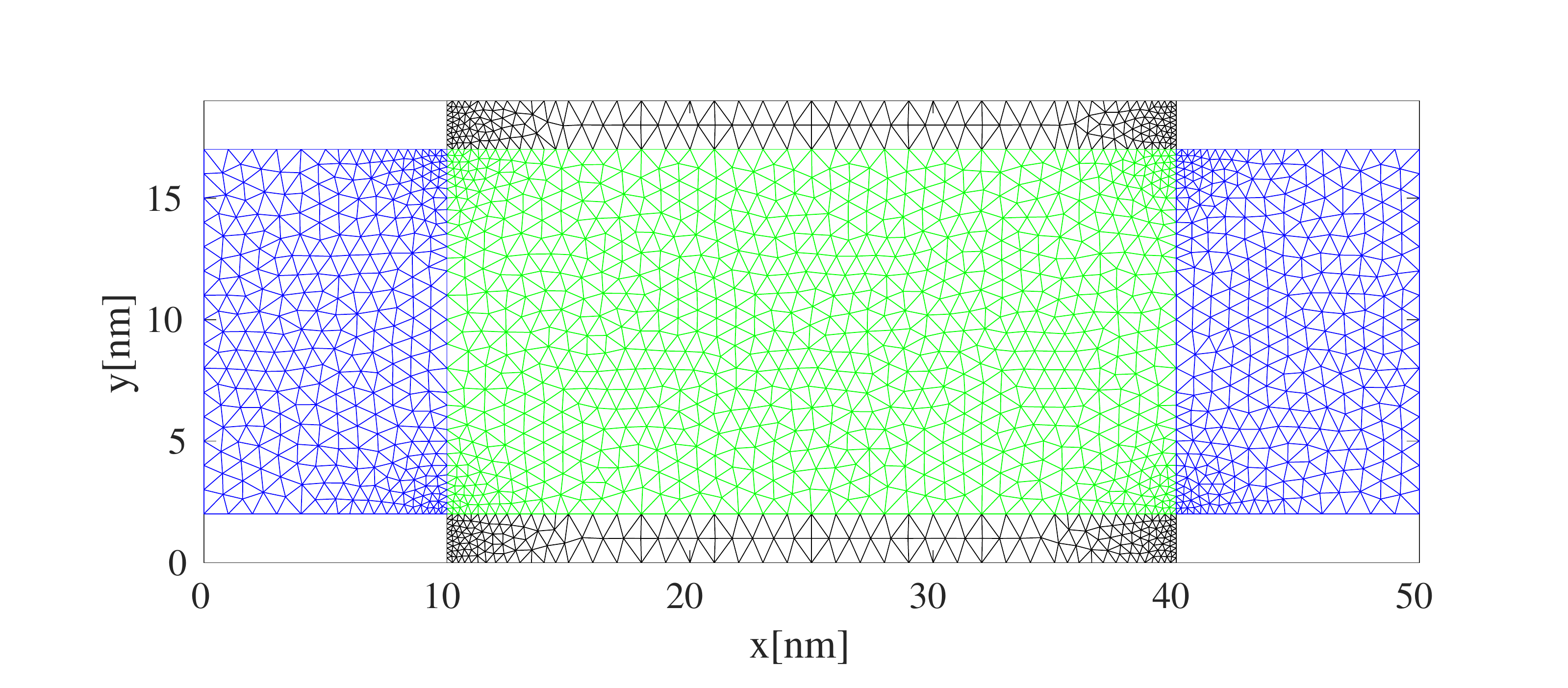}}
   \hfill    
   \subfloat{\includegraphics[width=0.5\linewidth]{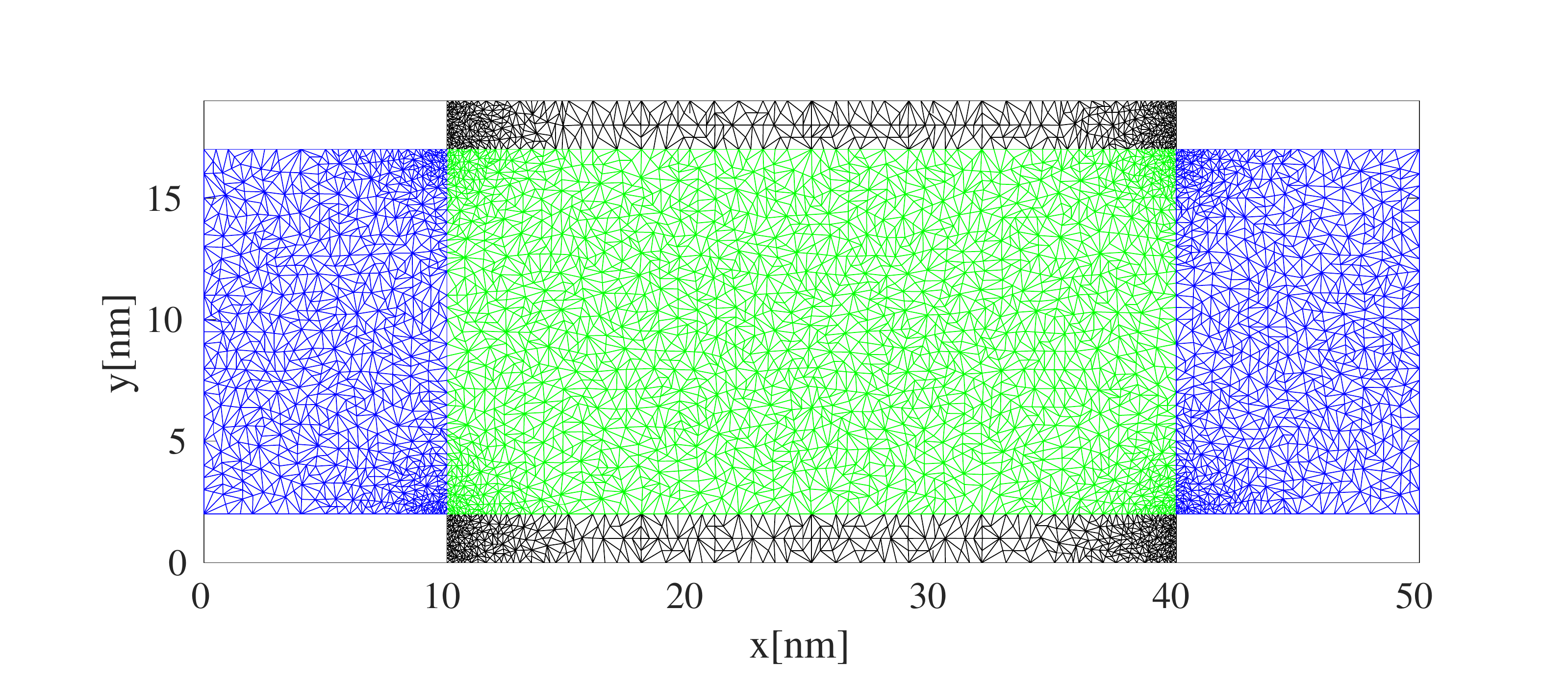}}
   \caption{Initial mesh (left) and uniformly refined mesh (right) for a DG-MOSFET.}
   \label{fig:refine}
 \end{figure}
 
 In the following, we strive to draw a comparison between adaptive
 and uniform MLMC-FE methods.

   Uniform and refined  meshes corresponding to the device
 shown in Figure~\ref{fig:device} are depicted in
 Figure~\ref{fig:refine}.  In adaptive refinement, we use the marking
 strategy introduced in \cite{dorfler1996convergent}. Here, for each
 element $T\in \mathcal{T}_\ell$, the local refinement
 indicator $\eta_T$   satisfies
 \begin{equation}
   \sum_{T \in \mathcal{M}}\eta_T^2\geq\theta\eta^2,
   \label{mark}
 \end{equation}
 where $\mathcal{M}$ is the set of elements marked for refinement and the associated error estimator is defined as
 \begin{equation*}
   \eta := \left(\sum_{T\in\mathcal{T}}\eta_T^2\right)^{1/2}.
 \end{equation*}	 	 	 	 	  
 In other words, we refine the smallest subset of elements whose corresponding error indicators in sum exceed the threshold $\theta \eta^2$.

 The adaptive algorithm for the boundary-value problem is shown
 Algorithm~1. In the multilevel setting, the mesh is
 refined as long as $\mathcal{E}^2_L$ is greater than or equal to
 $\varepsilon^2/2$ and the number of samples are obtained according to the optimization problem (\ref{op:MLMC}). Also, the same number and positions of random
 variables are used on all levels $\ell \in \{1,\ldots,L\}$. In the
 numerical example, we set $\red{\theta:=0.6}$ and the initial mesh
 $\mathcal{T}_0$ and its uniform refinement are depicted in Figure~\ref{fig:refine}.
 
 \begin{algorithm}[ht!]
   \label{algorithm}
   \textbf{Initialization ($\ell=0$):} 
   \vspace{0.1cm}
   
   Initial mesh $\mathcal{T}_0$.
   
   \vspace{0.1cm}
   
   \While{$\mathcal{E}_\ell^2> \epsilon^2/2$}{
     \vspace{0.2cm}
     ~~~\textbf{for} $i=1,\ldots,M_\ell$
     
     \vspace{0.2cm}
     \qquad ~~~  (i)~Solve the boundary-value problem \eqref{modeleqn} to find $V_{h_\ell}^{(i)}$, $v_{h_\ell}^{(i)}$ and $u_{h_\ell}^{(i)}$ according to $M_\ell$.\\
      \qquad ~~~  (ii)~Compute \red{the error indicator}  by
        \eqref{post} for the $i$th sample on
      	all elements.
     
     \vspace{0.1cm}
     ~~~ \textbf{end}
       
       
     \vspace{0.1cm}
       
    ~~~Compute the a posteriori error estimator for the expected
      values of the solutions according to \eqref{coro21}.
       
     \vspace{0.1cm}
       
      ~~~Determine the triangles to be refined  using the
     marking strategy \eqref{mark}.
     
      ~~~$\mathcal{T}_{\ell+1} := \text{refine } (\mathcal{T}_{\ell}, \mathcal{M}_\ell)$ ~where~$\mathcal{M}_\ell$~determined~by~ \eqref{mark}.
 
     \vspace{0.1cm}
     
      ~~~$\ell:=\ell+1$.
 
     \vspace{0.1cm}
     
      ~~~Estimate the discretization error $(\mathcal{E}_\ell)$ according to the refined meshes.
   }
   \caption{The adaptive MLMC-FE strategy for the coupled system of
     equations \eqref{modeleqn}.}
 \end{algorithm}
 
 The adaptively refined meshes for $\ell \in \{1,\ldots,6\}$ for the
 coupled system of equations are shown in Figures~\ref{fig:mesh12},
 \ref{fig:mesh34}, and~\ref{fig:mesh54}. As shown, most of the meshes
 have been refined due to the randomness in the source and drain areas.
 Similarly, the interface condition between the insulator and the
 channel~$\Gamma$ gives rise to more refinements, where less mesh
 refinement occurred in the channel (green triangles). The
 corresponding degree of freedom for the Poisson
 ($\mathcal{N}_{\mathcal{P}_\ell}$) and drift-diffusion
 ($\mathcal{N}_{\mathcal{DD}_\ell}$) equations are summarized in
 Table~\ref{table:NP} and Table~\ref{table:DD}, respectively. We
 compare the obtained degrees of freedom for adaptive and uniform
 refinement, where the initial mesh is the same in both cases.
 
 \begin{figure}[ht!]
   \centering
   \subfloat{\includegraphics[width=0.5\linewidth]{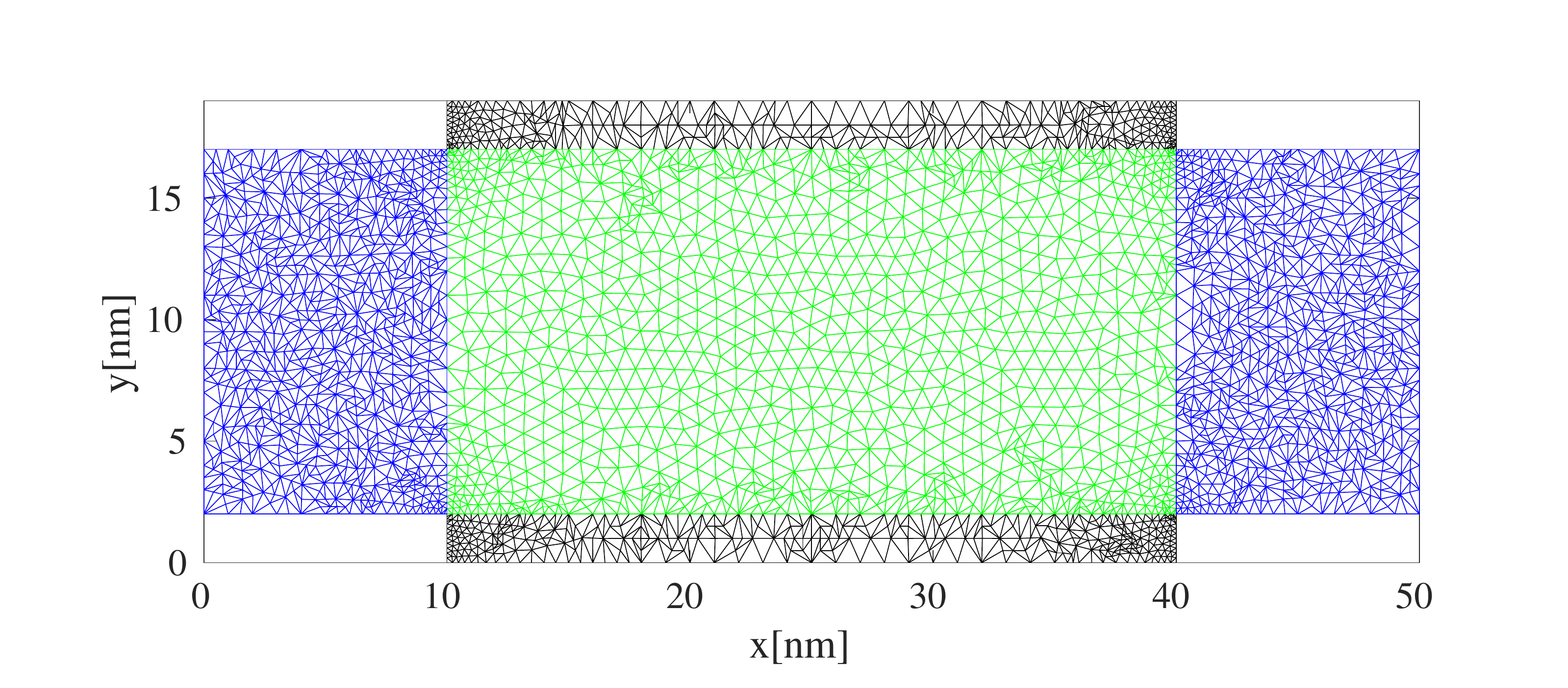}}
   \hfill    
   \subfloat{\includegraphics[width=0.5\linewidth]{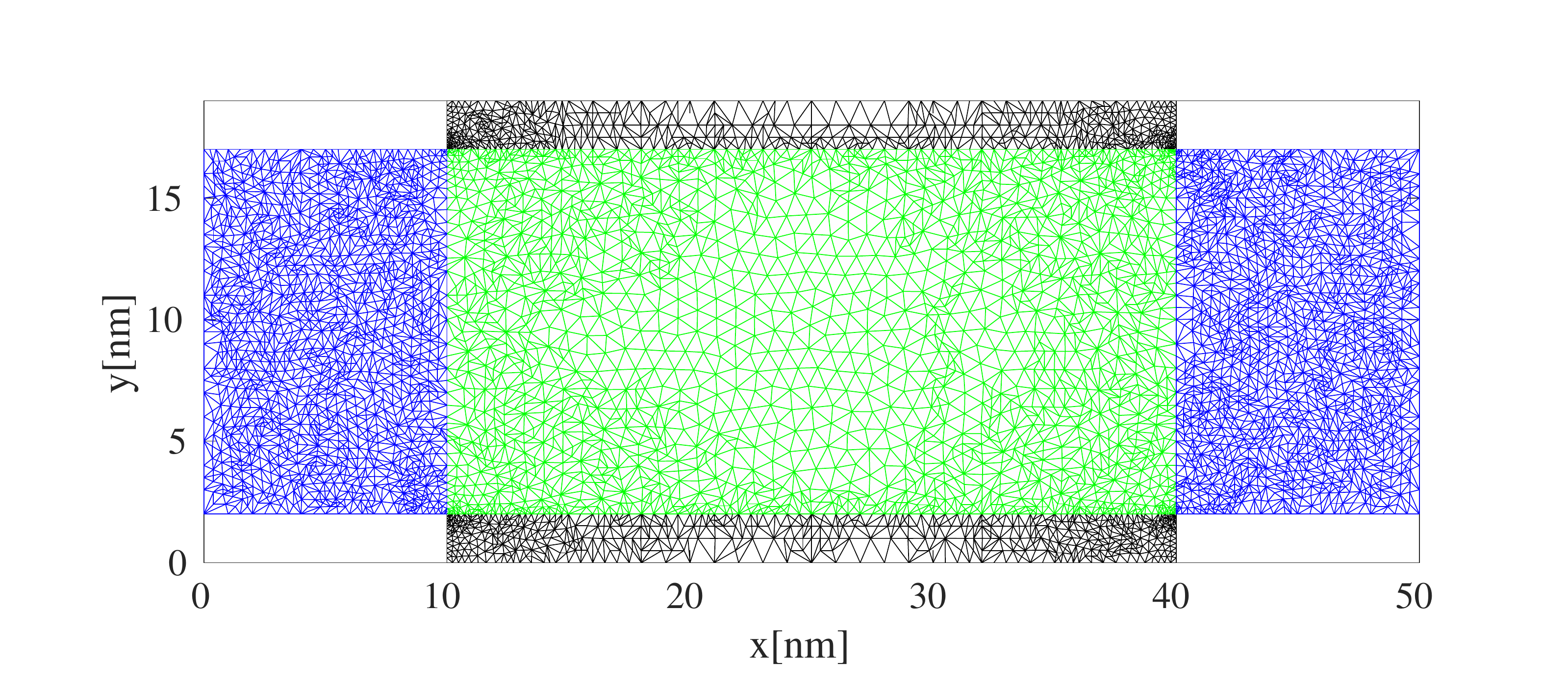}}
   \caption{Adaptive mesh refinement for a DG-MOSFET with random
     dopants at $\ell=1$ (left) and $\ell=2$ (right).}
   \label{fig:mesh12}
 \end{figure}
 
 \begin{figure}[ht!]
   \centering
   \subfloat{\includegraphics[width=0.5\linewidth]{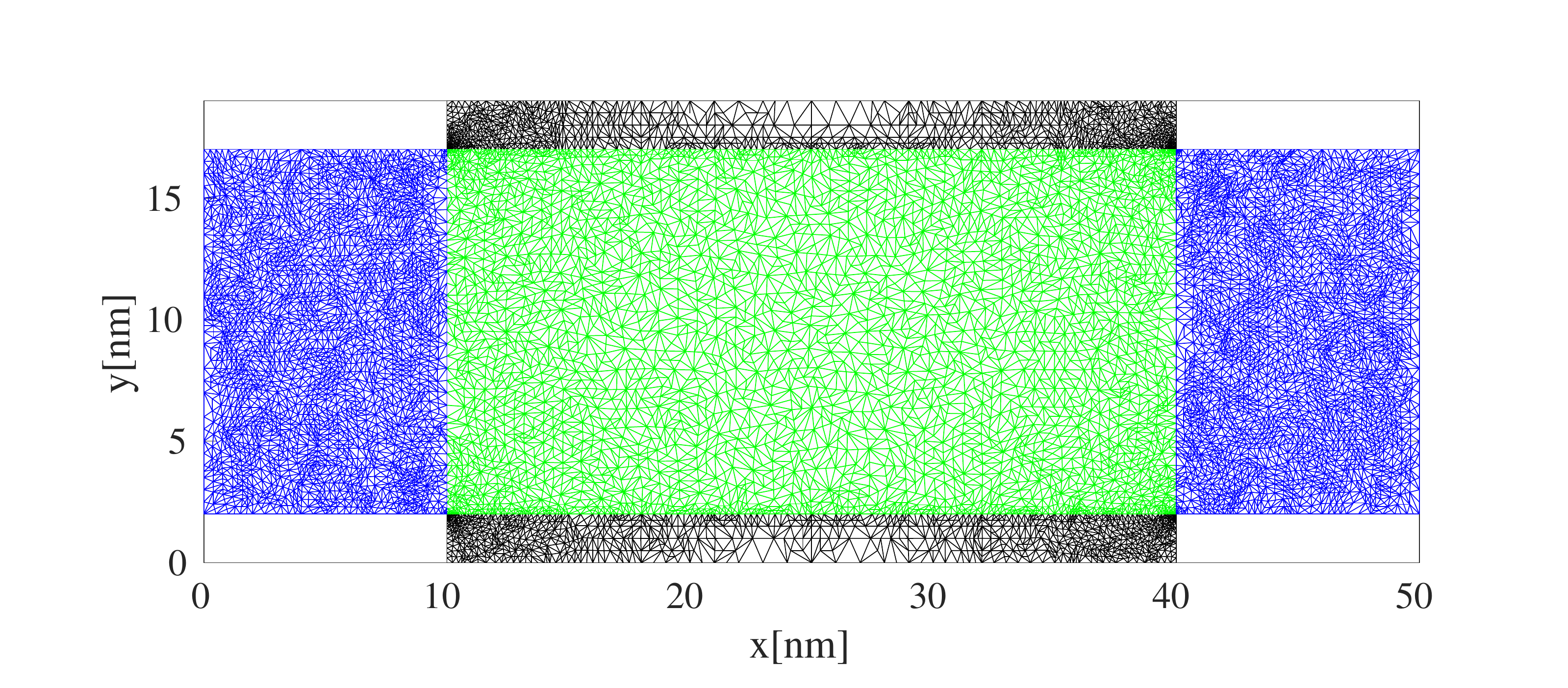}}
   \hfill    
   \subfloat{\includegraphics[width=0.5\linewidth]{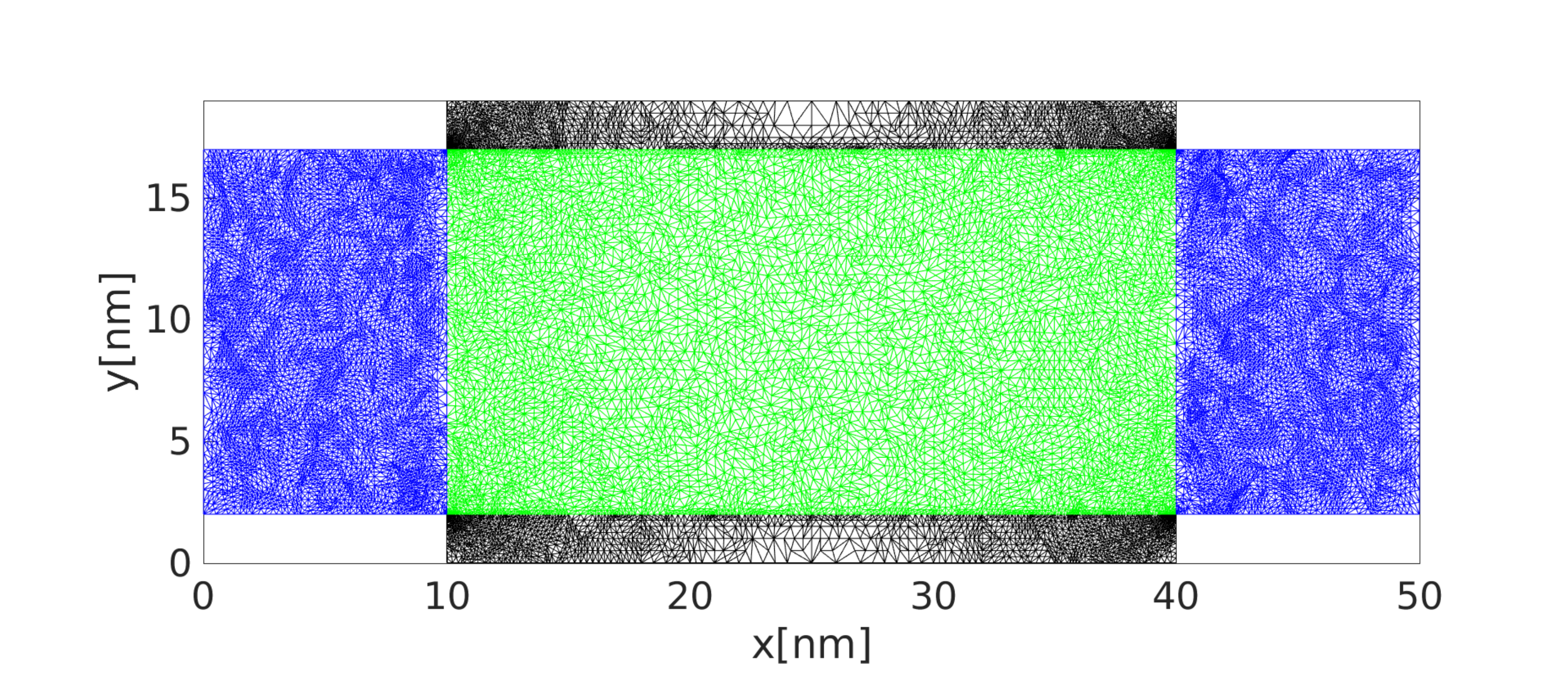}}
   \caption{Adaptive mesh refinement for a DG-MOSFET with random
     dopants at $\ell=3$ (left) and $\ell=4$ (right).}
   \label{fig:mesh34}
 \end{figure}
 
 \begin{figure}[ht!]
   \centering
   \subfloat{\includegraphics[width=0.5\linewidth]{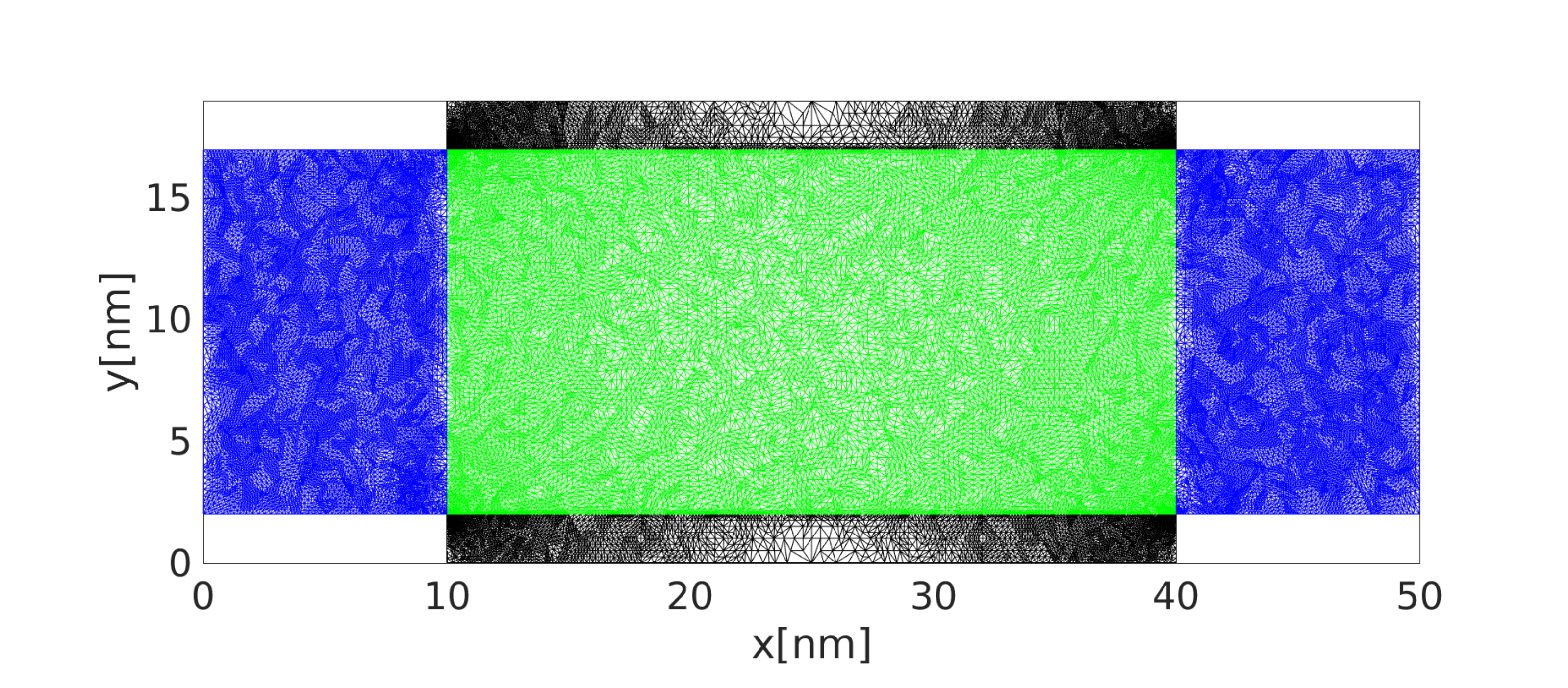}}
   \hfill    
   \subfloat{\includegraphics[width=0.5\linewidth]{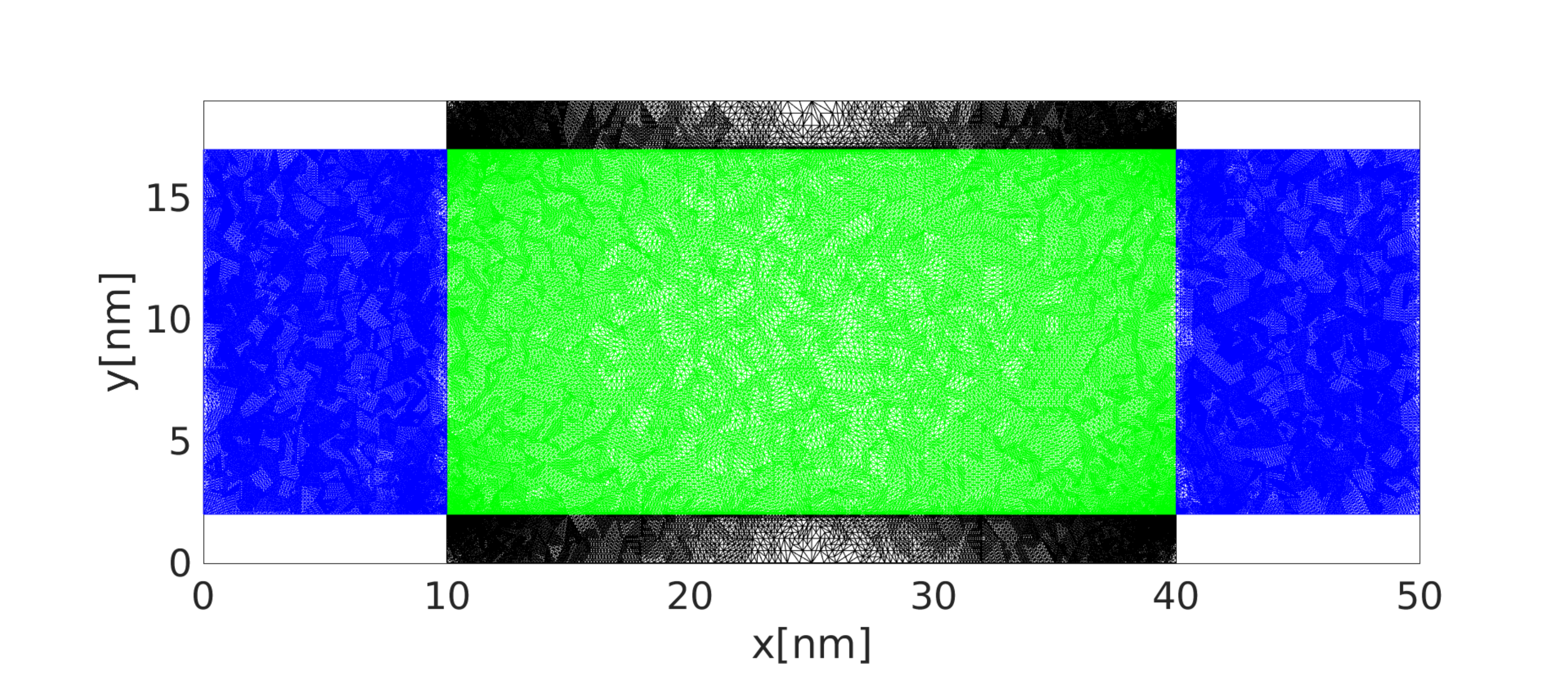}}
   \caption{Adaptive mesh refinement of a DG-MOSFET with random dopants
     at $\ell=5$ (left) and $\ell=6$ (right).}
   \label{fig:mesh54}
 \end{figure}
 
 \begin{table}[ht!]
   \centering
   \begin{tabular}{rrrrrrrr}
     $\ell$  &0  & 1 & 2 & 3 & 4 & 5 & 6 \\
     \hline
     uniform & 1\,685 & 4\,749  & 11\,626 & 27\,084& 60\,569& 131\,819 & 280\,264 \\
     adaptive & 1\,685 & 2\,839   & 5\,531 & 10\,876 & 22\,128 & 45\,885 & 98\,123
   \end{tabular}
   \caption{Degrees of freedom $\mathcal{N}_{\mathcal{P}_\ell}$ for
     different levels comparing uniform MLMC-FE and adaptive MLMC-FE
     methods.}
   \label{table:NP}
 \end{table} 
 
 \begin{table}[ht!]
   \centering
   \begin{tabular}{rrrrrrrr}
     $\ell$  &0  & 1 & 2 & 3 & 4 & 5 & 6 \\
     \hline
     uniform & 755 & 2\,141  & 5\,293 & 12\,372& 27\,743& 60\,509 & 128\,833 \\
     adaptive & 755 & 898   & 1\,640 & 3\,166 & 6\,614 & 13\,553 & 29\,305
   \end{tabular}
   \caption{Degrees of freedom $\mathcal{N}_{\mathcal{DD}_\ell}$ for
     different levels comparing uniform MLMC-FE and adaptive MLMC-FE
     methods.}
   \label{table:DD}
 \end{table}	
 \begin{figure}[ht!]
	\centering
	\includegraphics[width=10.5cm, height=6.0cm]{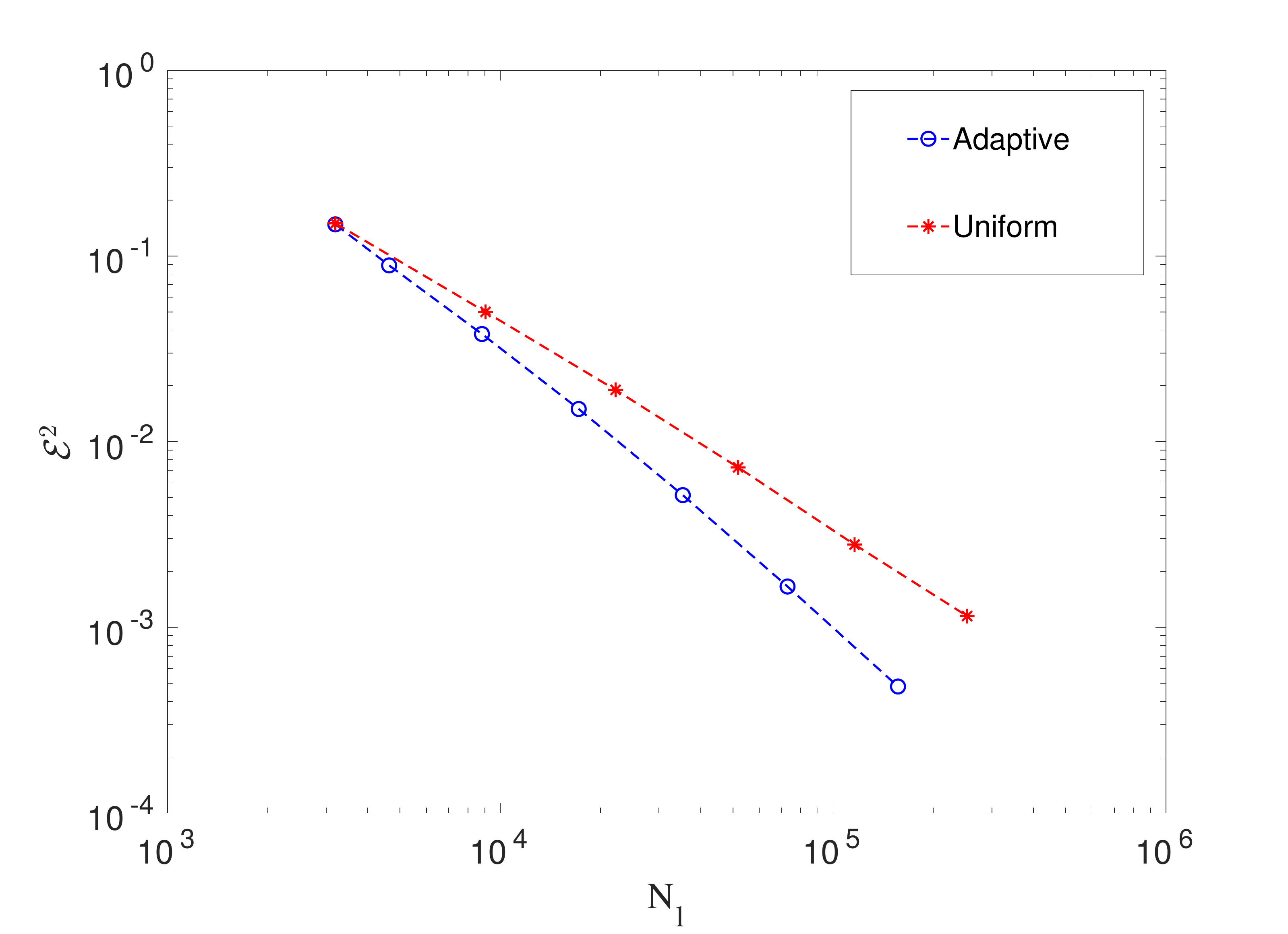}
	\caption{The discretization error and error estimator ($\eta$) of the drift-diffusion-Poisson
		system \eqref{eq:l2error} as a function of the degrees of
		freedom. Here, we consider uniform mesh-refinement as well as the adaptive strategy.}
	\label{fig:dis}
\end{figure}
 \begin{figure}[ht!]
	\centering
	\includegraphics[width=10.5cm, height=6.0cm]{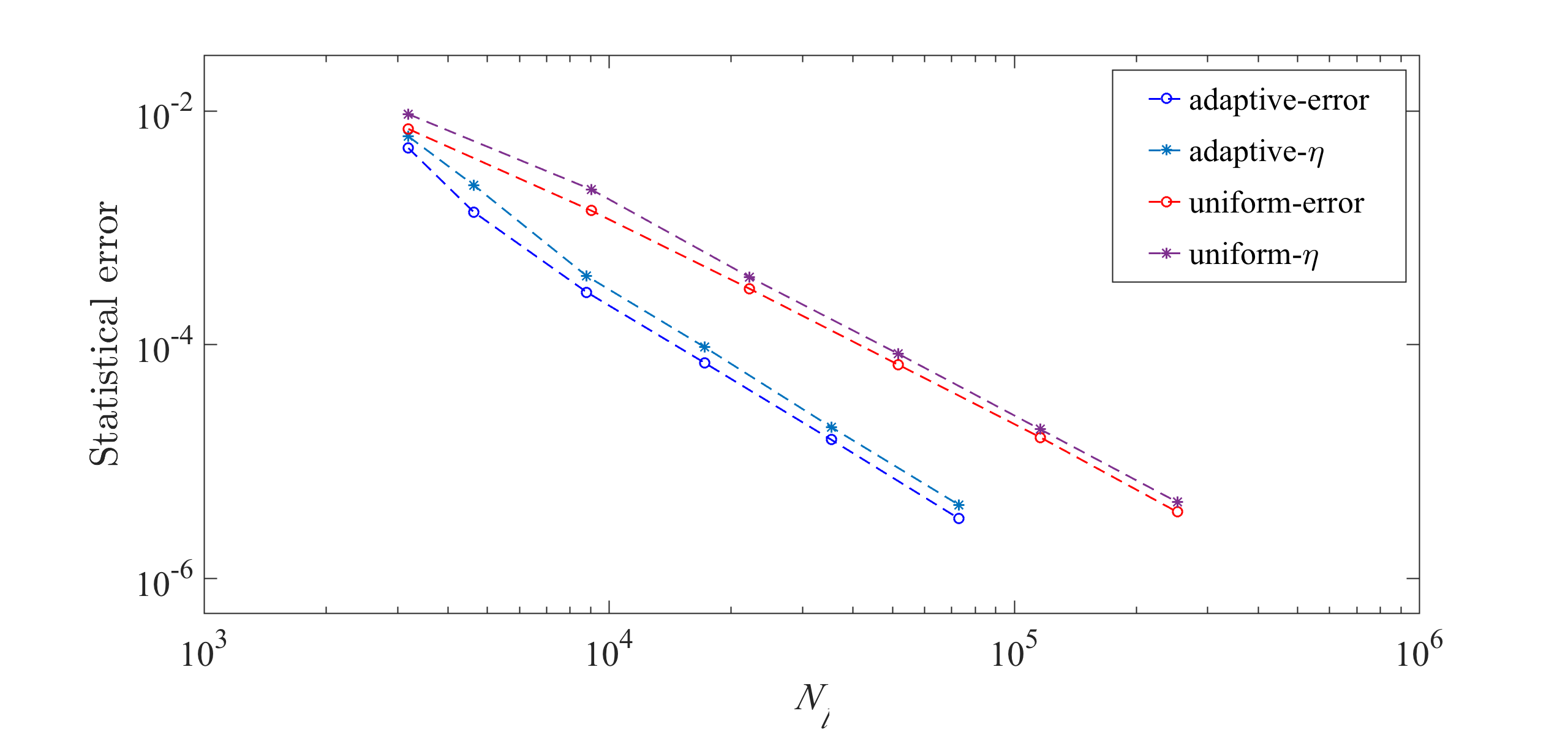}
	\caption{The statistical error (using error and error estimator ($\eta$)) of the drift-diffusion-Poisson system
		\eqref{eq:variance} as a function of the degrees of freedom. Here, we consider uniform mesh-refinement as well as the adaptive strategy.}
	\label{fig:variance}
\end{figure}
 
 In the next step, we compare the discretization error of uniform and
 adaptive refinement \red{with 100 samples}. Figure~\ref{fig:dis} indicates that the adaptive
 refinement reduces the \red{discretization} error, \red{i.e., $\mathcal{E}_\ell$} \red{(with respect to a reference solution),} and obtains a convergence rate of
 $\alpha=1.46$. However, the uniform refinement leads to a smaller
 convergence rate of $\alpha=1.11$. Obviously, using adaptive refinement enables us to obtain a rate closer to the optimal convergence rate given in Corollary \ref{cor1}. Here, we also show the error estimator ($\eta$) for adaptive and uniform mesh-refinement to confirm its efficiency and reliability.
 
 Moreover, we compare the statistical error of both multilevel methods.
 Figure~\ref{fig:variance} illustrates the decay of the variance for
 different degrees of freedom. The results show that similar to the
 discretization error, the variance in the adaptive approach is reduced
 faster ($\beta=2.27$) compared to uniform refinement ($\beta=1.73$).
 Again, the efficiency of the adaptive method is shown by the numerical
 results indicating error and error estimator for adaptive and uniform refinements.
 Also, $C_0=0.041$ is obtained as the variance of level
 $\ell=0$.

 \begin{table}[t!]
 	\centering
 	\begin{tabular}{rrrrrrrr}
 		$\varepsilon$  &$M_0$  & $M_1$ & $M_2$ & $M_3$ & $M_4$& $M_5$      \\
 		\hline
 		0.080  & 17 & 5  & -- & --&--&--&  \\
 		0.040  & 90 & 26  & 8 & --&--&--&   \\
 		0.020  & 418  & 121  & 35 & 11& --&--&  \\
 		0.010  & 1\,671 & 481  & 137 & 42& --&--&    \\
 		0.007  & 3\,759 & 1\,080  & 308 & 94 & 30&--&    \\
 		0.005  & 7\,366 & 2\,117  & 604 & 183 & 58&--&    \\
 		0.002  & 5\,3764 & 17\,113& 4\,163 & 1\,012 & 573 & 57&    \\
 	\end{tabular}
 	\caption{The optimal number of samples for the uniform MLMC-FE method.}
 	\label{table:5}
 \end{table}
 
 \begin{table}[t!]
 	\centering
 	\begin{tabular}{rrrrrrrrrr}
 		$\varepsilon$  &$M_0$  & $M_1$ & $M_2$ & $M_3$ & $M_4$& $M_5$& $M_6$&   \\
 		\hline
 		0.080  & 18 & 6  & 3 & --&--&--&--&  \\
 		0.040  & 78 & 26  & 10 & 4&--&--&--&  \\
 		0.020  & 334  & 111  & 44 & 14& 5&--&--&   \\
 		0.010  & 1\,369 & 472  & 186 & 41& 20& --& --&   \\
 		0.007  & 2\,856 & 949  & 372 & 119 & 42& 14& --&   \\
 		0.005  & 5\,597 & 1\,859  & 729 & 233 & 81& 27& --&   \\
 		0.002  & 36\,068 & 11\,979  & 4\,692 & 1\,502 & 520& 170 & 54&   \\
 	\end{tabular}
 	\caption{The optimal number of samples for the adaptive MLMC-FE method.}
 	\label{table:4}
 \end{table}
 In order to estimate the optimal computational complexity, we solve
 the optimization problem \eqref{op:MLMC}. An interior point method can
 be used to solve the global optimization problem
 \cite{taghizadeh2017optimal}, where the results are the optimal number
 of samples. For different tolerances~$\varepsilon$, the optimal values
 are summarized in Table~\ref{table:5} and Table~\ref{table:4} for
 uniform and adaptive refinements, respectively. In multilevel methods,
 most of the work is performed on coarse levels. The main reason is the
 reduction of variance on the finer grids.
 
 The computational work $\sum_{\ell=0}^{L} M_\ell\mathcal{N}_\ell$ for
 both refinement methods are depicted in Figure~\ref{fig:work}. We here
 observe a significantly better efficiency of the adaptive model compared
 with the uniform approach. As depicted in the figure, the
 computational cost asymptotically behaves like $O(\varepsilon^{-2})$
 for both multilevel techniques, which agrees with
 \cite{charrier2013finite}. A more interesting computational result achieved regarding the CPU time. As Figure ~\ref{fig:work} shows
  since in the adaptive approach (compared to the uniform refinement) fewer degrees of freedom are needed, in different levels, lower computational time is used (to obtain same error tolerance). The difference between two computational times is more pronounced for lower prescribing errors that indicates the adaptive technique efficiency.

 \begin{figure}[ht!]
	\centering
	\subfloat{\includegraphics[width=8cm, height=5cm]{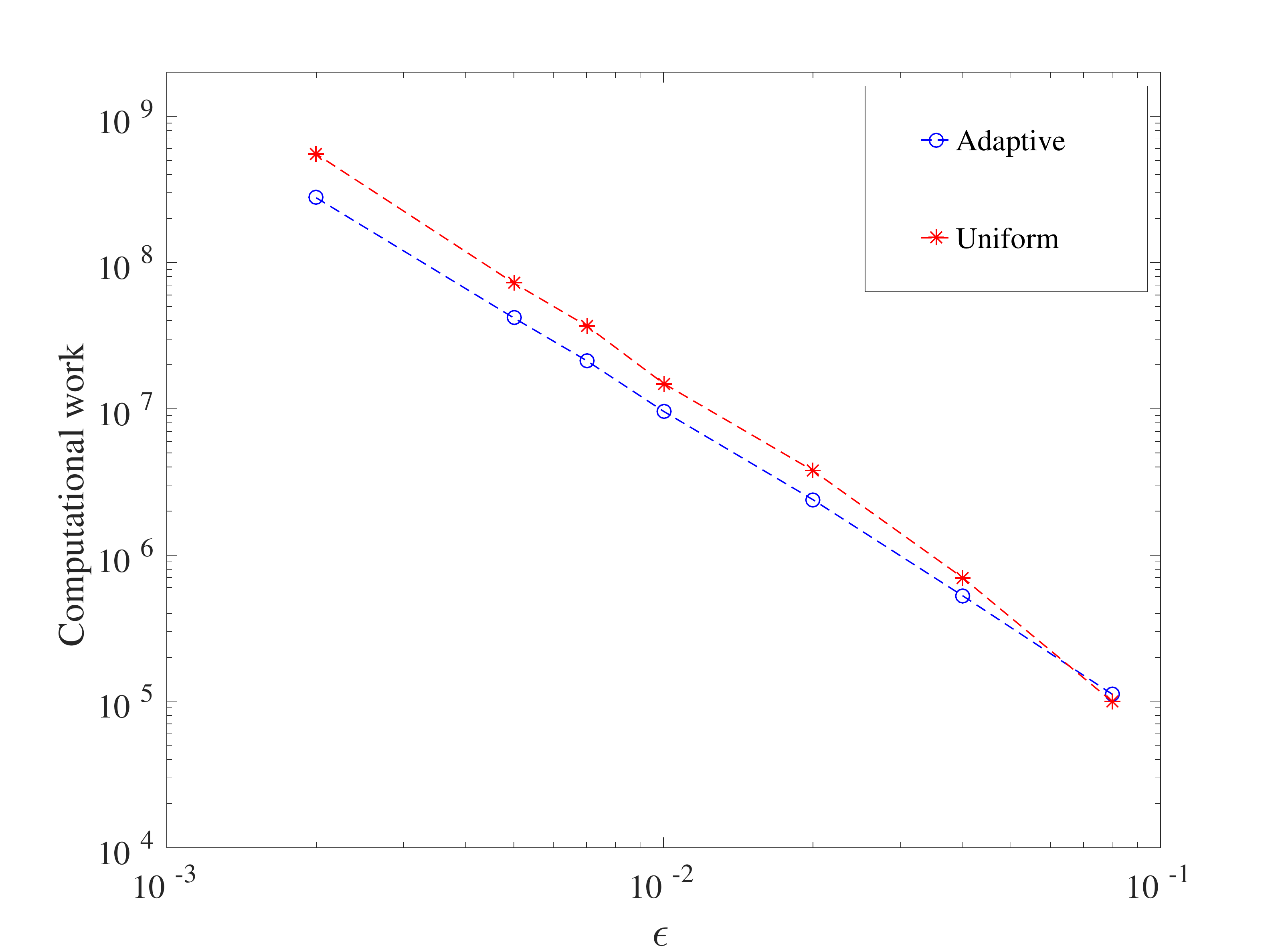}}
	\hfill    
	\subfloat{\includegraphics[width=8cm, height=5cm]{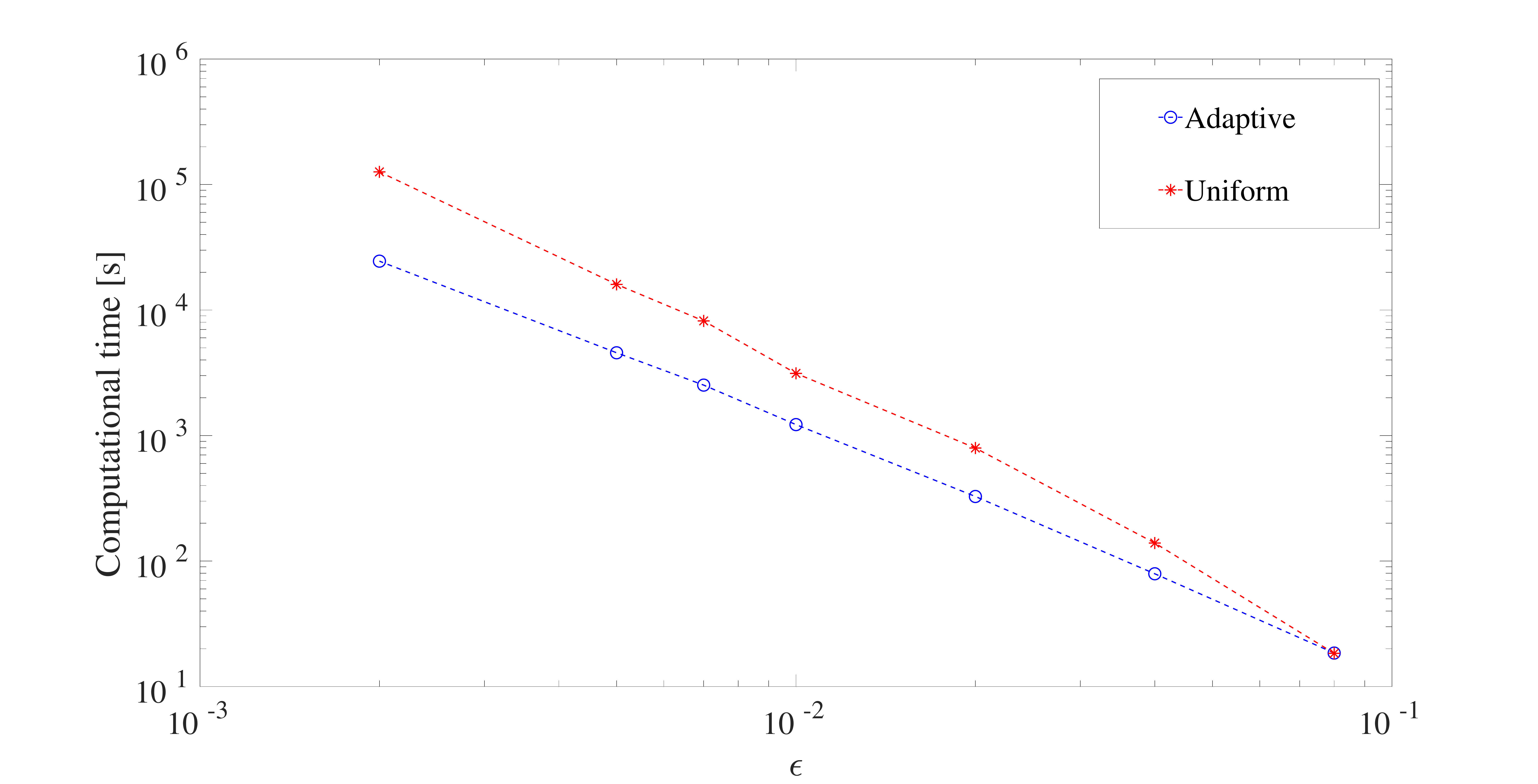}}
	\caption{The computational cost (left) and computational cost (right) of uniform MLMC-FE and adaptive
		MLMC-FE methods for different prescribed tolerances.}
	\label{fig:work}
\end{figure}	 	 	 	
 
 \section{Conclusions}
 
 We have presented an adaptive MLMC-FE method for the numerical
 solution of the stochastic drift-diffusion-Poisson system. First, we
 proved an a-priori error estimate for the coupled system of equations
 with non-zero recombination rate. The error estimate points out how
 fast the error decreases as the mesh size decreases and can be
 considered as a useful measure of the efficiency of a given
 finite-element method. Also, using the stochastic numerical example,
 we estimated the convergence rate of the discretization error.
 
 Secondly, a practically useful a-posteriori error indicator to bound
 the discretization error for the coupled system of equations was
 derived. From a computational point of view, the error estimator is
 inexpensive to estimate and guarantees the bounds on the error on all
 points of the geometry. The error indicator was used to design an
 adaptive refinement strategy to refine the mesh, where all
 coefficients in the system of equations can be random. In
   future works, the error analysis performed here can be implemented
   for extended systems of equations accounting for quantum effects at
   first order in the direction perpendicular to the gate (e.g.,
   density gradient corrections to the charge term in the Poisson
   equation).
 
 Regarding numerical examples, we implemented this adaptive MLMC-FE
 method to quantify noise and variations in nanoscale transistors as a
 real-world example. To this end, we defined a strategy to refine the
 meshes in the stochastic setting. The new technique was compared to
 the multilevel method with uniform refinement as a useful benchmark.
 Better convergence of the discretization error and better decay of
 variance were observed indicating the efficiency of the new approach.
 Finally, we employed an optimization problem to minimize the
 computational complexity. The optimal numbers of samples are obtained
 as the solution of the global optimization problem. The results
 indicate that in addition to a better control of error, a noticeable
 reduction of the computational work/time are achieved by the adaptive
 method.
 
 \section{Acknowledgments}
 
 The first and the last authors acknowledge support by FWF (Austrian
 Science Fund) START project no.\ Y660 \textit{PDE Models for
   Nanotechnology}. The second author also acknowledges support by FWE  project no.\ P28367-N35. The authors appreciate useful comments by
 Markus Melenk (TU Wien).  The authors also acknowledge the helpful comments
 	by three anonymous reviewers.
  	
 \bibliographystyle{elsarticle-num}
 \bibliography{MLMC2}

\end{document}
